\newtheorem{theorem}{Theorem}
\newtheorem{definition}[theorem]{Definition}
\newtheorem{proposition}[theorem]{Proposition}
\newtheorem{lemma}[theorem]{Lemma}
\newtheorem{conjecture}[theorem]{Conjecture}
\newtheorem{problem}[theorem]{Problem}
\newtheorem{question}[theorem]{Question}
\theoremstyle{definition}
\newtheorem*{definition*}{Definition}
\newenvironment{customthm}[1]
 {\innercustomthm}
 {\endinnercustomthm}
\renewcommand{\Im}{Im}
\newcommand*{\sH}{\mathscr{H}}
\newcommand{\abs}[1]{\left|#1\right|}
\newcommand*{\myproofname}{Proof}
\title{Packing colourings in complete bipartite graphs and\\the inverse problem for correspondence packing}
\author{Stijn Cambie\thanks{Department of Computer Science, KU Leuven Campus Kulak-Kortrijk, 8500 Kortrijk, Belgium, supported by the Institute for Basic Science (IBS-R029-C4) and a postdoctoral fellowship by the Research Foundation Flanders (FWO) with grant number 1225224N.
E-mail: {\tt stijn.cambie@hotmail.com}},
Rimma H\"am\"al\"ainen\thanks{Discrete Mathematics Group, Institut f\"ur Mathematik, Technische Universit\"at Berlin, E-mail: {\tt haemael@math.tu-berlin.de}}
}
\begin{document}
\parindent=0cm

\maketitle

\begin{abstract}
Applications of graph colouring often involve taking restrictions into account, and it is desirable to have multiple (disjoint) solutions. In the optimal case, where there is a partition into disjoint colourings, we speak of a packing. However, even for complete bipartite graphs, the list chromatic number can be arbitrarily large, and its exact determination is generally difficult. For the packing variant, this question becomes even harder.

In this paper, we study the correspondence- and list packing numbers of (asymmetric) complete bipartite graphs. In the most asymmetric cases, Latin squares come into play. Our results show that every $z \in \mathbb Z^+ \setminus \{3\}$ can be equal to the correspondence packing number of a graph. We disprove a recent conjecture that relates the list packing number and the list flexibility number. Additionally, we improve the threshold functions for the correspondence packing variant.
\end{abstract}

\section{Introduction}
In a foundational paper introducing the concept of choosability, \cite{ERT80}, it was already noted that the list chromatic number of a bipartite (two-colourable) graph can be arbitrarily large, and in the symmetric case, $\chi_{\ell}(K_{n,n})\sim \log_2 n.$
Determining $\chi_{\ell}(K_{a,b})$ precisely for general $a,b>0$ is a difficult task, except for some very asymmetric cases.
In this paper, we study the same question for two related concepts. The formal definitions of these concepts are provided in Subsection\ref{subsec:not&def}, along with remarks on notation (and slight abuse of notation).

When attempting to optimise resource utilisation in resource allocation problems, or finding a satisfactory schedule for scheduling problems, one can describe the problem as a list- or correspondence-colouring problem. In such situations, it may be desirable to find multiple disjoint solutions. For example, employers may prefer a varied job schedule throughout the week, with a different schedule for each day. The natural question is to identify when the maximum number of disjoint solutions can be obtained, which is known as a 'packing' of colourings. 

As an example of a correspondence packing problem, consider a scenario where there are $n$ cohorts, and each cohort has $k$ students. 
For every student $S$ and every cohort $C$ that $S$ is not part of, we assume that there is at most one student $T \in C$ for which $S$ and $T$ decline to work together. The goal is to form $k$ disjoint groups of students, each group consisting of one student from every cohort, and with no conflicts within each group.
The conjecture of Yuster~\cite{Yus21} states that $k \ge n$ or $k \ge n+1$ depending on the parity of $n$, is a necessary and sufficient condition to ensure that this group partitioning is possible.

Exploring the suggestion of Alon, Fellows and Hare~\cite{AFH96}, Cambie et al.~\cite{CCDK21} defined the list packing number $\chi_{\ell}^\star$ and correspondence packing number $\chi_{c}^\star$ $25$ years later. 
For a fixed graph $G$, the numbers $\chi_{\ell}^\star(G)$
and $\chi_{c}^\star(G)$
represent the smallest integer $k$ such that for every assignment with lists of length $k$, one can find $k$ disjoint proper (list resp. correspondence) colourings.
With this terminology, we can state the conjecture of the example above as $\chi_{c}^\star(K_n)=n$ when $n$ is even and $\chi_{c}^\star(K_n)=n+1$ when $n$ is odd. In particular, the correspondence packing number would be even for every clique.

In~\cite[Thr.~3\&9]{CCDK21}, it was shown that $\chi_{\ell}^{\star}(G) \le \chi_{c}^{\star}(G) \le 2 \delta^{\star}(G)$, i.e. the packing numbers are bounded by twice the degeneracy.
Furthermore, it was observed in~\cite[Prop.~24]{CCDK21}
that for every $d$, there exist complete bipartite graphs with degeneracy $d$ for which $\chi_{c}^{\star}(G) =2d.$
In particular, the last result implies that every even positive integer can be expressed as $\chi_c^\star(G)$ for some graph $G$.
In future work~\cite{CCDK22+}, but also stated in the thesis of Cambie~\cite{cambie2022thesis}, by studying the cycle we know that $3$ cannot be expressed as $ \chi_c^\star(G)$ for some graph $G$, i.e. $3$ is not in the image $\Im(\chi_c^\star)$, the set of possible values of $\chi_c^{\star}(G)$ for any graph $G$.
Equivalently, once the correspondence packing number of a graph is larger than $2$, it is at least $4$.
A similar and even more surprising behaviour was studied in~\cite{KS15} where it was proven that the chromatic number of a non-bipartite quadrangulation of the $n$-dimensional real projective space is at least $n+2$. This was initially proven for $n=2$ by Youngs in \cite{Y96}.

Before our work, it was unknown whether any odd number (different from $1$) belongs to $\Im(\chi_c^\star)$ and as such the inverse problem for this problem felt appealing, giving a plausible better understanding on the concept of correspondence packing number.
In an inverse problem, one wonders for a graph parameter $p$ and a fixed integer $z$, if there exists a graph $G$ for which $p(G)=z$. One can solve all of these inverse problems at once by determining $\Im(p)$.
Some examples of inverse problems (restricted to the class of trees) are given in~\cite{CSW09}, but also related to graph colouring problems there are more examples, see e.g.~\cite{BH14, Y96}.
The main conclusion of our paper is that
\begin{theorem}\label{thr:main}
Except for $z=3$, for every $z \in \mathbb Z^+$ there exists a graph $G$ for which $\chi_c^\star(G)=z,$ i.e.
 $$\Im(\chi_c^\star)=\mathbb Z^+ \setminus \{3\}.$$
\end{theorem}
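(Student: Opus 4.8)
The plan is to reduce Theorem~\ref{thr:main} to the case of odd $z\ge 5$, and then to realise every such $z$ as $\chi_c^\star$ of a suitable \emph{asymmetric} complete bipartite graph. Indeed $z=1$ is witnessed by $K_1$, every even $z$ lies in $\Im(\chi_c^\star)$ by \cite[Prop.~24]{CCDK21}, and $z=3$ is excluded by \cite{CCDK22+}, so it remains to show $2t+1\in\Im(\chi_c^\star)$ for every integer $t\ge 2$. Since $\chi_c^\star\le 2\delta^\star$, such a graph must have degeneracy at least $t+1$, so the natural candidate is $G=K_{t+1,b}$ for a well-chosen $b$, and the task splits into a lower bound $\chi_c^\star(K_{t+1,b})\ge 2t+1$ (valid once $b$ is large) and a matching upper bound (valid while $b$ is not too large).

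For the lower bound, set $k=2t$ and give every vertex the list $[k]$. A packing of $k$ colourings forces each vertex to receive a permutation of $[k]$ across the $k$ colourings; writing $\pi_1,\dots,\pi_{t+1}\in S_k$ for the permutations used on the small side, a big-side vertex $v$ with correspondence matchings $\sigma^v_1,\dots,\sigma^v_{t+1}$ (taken to be full permutations) is colourable only if some $\tau\in S_k$ is \emph{pointwise distinct} from each $\sigma^v_j\pi_j$, i.e. the union of the $t+1$ permutation matrices $\{\sigma^v_j\pi_j\}$ admits a disjoint permutation matrix. The key point is that $t+1$ permutations of $[2t]$ can be chosen so that no disjoint permutation exists: pick $S,T\subseteq[2t]$ with $|S|=t$ and $|T|=t-1$, take a proper $(t+1)$-edge-colouring of $K_{t,t+1}$ on rows $[2t]\setminus S$ and columns $[2t]\setminus T$ (each colour class is then a perfect matching on the $t$-side), and extend the $t+1$ classes to permutations $\rho_1,\dots,\rho_{t+1}$ of $[2t]$; for any row $i\in[2t]\setminus S$ the values $\rho_1(i),\dots,\rho_{t+1}(i)$ are exactly the $t+1$ columns of $[2t]\setminus T$, so a permutation $\tau$ pointwise distinct from all $\rho_j$ would send the $t$ rows of $[2t]\setminus S$ injectively into $T$, which is impossible. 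As there are only finitely many configurations $(\pi_1,\dots,\pi_{t+1})$ (at most $(k!)^{t+1}$), we devote one big-side vertex to each, equipping it via $\sigma^v_j=\rho_j\pi_j^{-1}$ so that it blocks exactly that configuration; taking $b_0$ to be at least this number of vertices gives $\chi_c^\star(K_{t+1,b})\ge 2t+1$ for all $b\ge b_0$.

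For the upper bound one must show that, for a suitable $b$ (ideally $b_0$ itself, or some $b$ below the threshold where the value first reaches $2t+2$), every correspondence assignment with lists of size $2t+1$ on $K_{t+1,b}$ admits a packing of $2t+1$ colourings. The plan is to colour the small side first and then complete greedily: choose permutations for the $t+1$ pairwise non-adjacent small vertices so that, for \emph{every} big-side vertex $v$, the $(t+1)\times(2t+1)$ array obtained by composing these permutations with the matchings $\sigma^v_j$ is a Latin rectangle; then for each $v$ the colours available to the $2t+1$ colourings form a bipartite graph that is $t$-regular on one side with minimum degree $t$ on the other, hence has a perfect matching, and $v$ can be coloured. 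Showing that one global choice on the small side works for all $b_0$ big vertices at once---against the adversary's matchings---is the crux, and this is precisely where Latin squares (and a counting argument over the finite set of configurations) and the sharpened threshold functions for $K_{a,b}$ enter.

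I expect this last step to be the main obstacle. In effect one is proving that, unlike $z=3$ (where $\chi_c^\star(K_{2,b})$ jumps from $2$ straight to $4$), the function $b\mapsto\chi_c^\star(K_{t+1,b})$ does not skip the value $2t+1$ once $t\ge 2$; this needs a quantitative grip on how the packing number of $K_{d,b}$ increases with $b$, rather than merely the qualitative monotonicity $\chi_c^\star(K_{d,b})\le\chi_c^\star(K_{d,b+1})\le 2d$. Once the lower and upper bounds are seen to meet at a common $b$, that $K_{t+1,b}$ is the desired witness and Theorem~\ref{thr:main} follows.
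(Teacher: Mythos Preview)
Your overall plan coincides with the paper's: reduce to odd $z\ge 5$, realise $z=2d-1$ as $\chi_c^\star(K_{d,b})$ for some $b$, and do this by sandwiching $b$ between the two threshold functions $T'(d)$ (first $b$ with $\chi_c^\star\ge 2d-1$) and $T(d)$ (first $b$ with $\chi_c^\star=2d$). Your lower-bound construction is also essentially the paper's obstruction (a Latin-square block forces Hall's condition to fail on the big side).

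The genuine gap is quantitative, and it is exactly the step you flag as ``the main obstacle''. Your blocking construction spends one big-side vertex per configuration $(\pi_1,\dots,\pi_{t+1})$, so your $b_0$ is of order $((2t)!)^{t+1}=((2d-2)!)^d$. This is far \emph{above} the threshold $T(d)$ at which $\chi_c^\star(K_{d,b})$ already equals $2d$: for $d=3$ one has $T(3)\le 389$ while your $b_0=(4!)^3=13824$, so $\chi_c^\star(K_{3,b_0})=6$, not $5$. In other words, knowing merely that $T'(d)\le((2d-2)!)^d$ and that $T(d)$ is finite does not give $T'(d)<T(d)$; you need sharp enough bounds on \emph{both} sides. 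The paper obtains these by (i) a counting argument showing $T(d)\ge x(d)=((2d-1)!)^d/\bigl(\binom{2d-1}{d}^2((d-1)!)^dN(d)\bigr)$, and (ii) a greedy/averaging argument (not one vertex per configuration) showing $T'(d)\le y(d)\cdot d\log((2d-2)!)$ with $y(d)$ defined via an exact count of forbidden $d\times(2d-2)$ packing matrices; one then checks $y(d)\cdot d\log((2d-2)!)<x(d)$ for all $d\ge 3$.

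A smaller point: in your upper-bound sketch, demanding that the $(t+1)\times(2t+1)$ array be a Latin \emph{rectangle} for every big-side $v$ is too strong and cannot be achieved against an adversary. The correct (and weaker) condition is simply that Hall's marriage condition holds for the availability graph at $v$; the paper shows this fails precisely when the array contains a $d\times d$ Latin-square block, and \emph{counts} such failures to get the lower bound on $T(d)$. Replacing your ``Latin rectangle'' plan with this counting is what makes the upper bound go through.
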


So while for most chromatic numbers, every positive integer can be attained by considering the complete graph, and it is conjectured that the correspondence packing number of complete graphs only attain even numbers (and $1$), when considering the correspondence packing number of graphs, every positive integer can be achieved, with the exception of $3$.

Returning to the result in~\cite[Prop.~24]{CCDK21}, we note that this proposition implies that there exists a threshold function $T(d)$ such that $\chi_{c}^{\star}(K_{d,t}) =2d$ if and only if $t \ge T(d).$
In~\cite[Prop.~24]{CCDK21} it was proven that $T(d) \le ((2d-1)!)^d.$
We will improve this result and determine $T(d)$ up to a polylogarithmic factor in Section~\ref{sec:thresholdfunction_2s}. Note that the determination of the analogue for the correspondence chromatic number~\cite{Mudrock18} also involves a similar polylogarithmic factor.
The main result of Section~\ref{sec:thresholdfunction_2s} is summarised in Theorem~\ref{thr:threshold_2d}. 
Here $N(d)$ denotes the number of Latin squares of order $d$ (see Subsection~\ref{subsec:not&def}) and $\log$ denotes the natural logarithm.

\begin{theorem}\label{thr:threshold_2d}
 Let $x(d)= \frac{ ((2d-1)!)^d }{\binom{2d-1}{d}^2((d-1)!)^d N(d)}$.
 Then 
 \begin{enumerate}
 \item \label{itm:1} $\chi_{c}^{\star}(K_{d,t}) <2d$ if $t<x(d)$ and 
 \item \label{itm:2} $\chi_{c}^{\star}(K_{d,t}) =2d$ if $ t \ge x(d)\cdot d \log\left( (2d-1)! \right ).$
 \end{enumerate} 
\end{theorem}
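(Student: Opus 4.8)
The plan is to reduce both parts to a single counting quantity: how many ``packing colourings of the small side'' a single vertex of the large side can destroy. Write $A$ for the side of size $d$ and $B$ for the side of size $t$, so $K_{d,t}$ has degeneracy $d$ and $\chi_c^\star(K_{d,t})\le 2d$ holds automatically. Fix a correspondence assignment with lists of size $2d-1$. Since $A$ is independent, a proper packing restricted to $A$ is merely a tuple $\Phi|_A$ of $d$ permutations of $[2d-1]$ (the $s$-th entry recording the colour the $s$-th colouring gives each vertex of $A$), and $\Phi|_A$ extends to a full packing if and only if it extends to each $b_j\in B$ separately. After composing with the correspondences on the edges at $b_j$ (a bijection on the set of $\Phi|_A$), extending to $b_j$ becomes the question of choosing a permutation of $[2d-1]$ avoiding, in the $s$-th slot, the at most $d$ colours that the colourings use at the neighbours of $b_j$. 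By König--Hall, this fails exactly when some $\ell$ slots have forbidden sets whose union misses fewer than $\ell$ colours. Here a pigeonhole observation is decisive: each colour occurs exactly once per colouring per vertex of $A$, so it lies in at most $d$ of the $2d-1$ forbidden slots; together with each forbidden slot having size at most $d$ this forces $\ell=d$ and pins the obstruction to a rigid shape -- $d$ of the colourings together with a $d$-subset $D$ of colours such that, on those colourings and on $A$, the colour pattern is a Latin square of order $d$ with symbol set $D$.

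Counting these rigid obstructions yields the key number. One chooses the $d$ colourings ($\binom{2d-1}{d}$ ways), the palette $D$ ($\binom{2d-1}{d}$ ways), the Latin square ($N(d)$ ways), and then extends each of the $d$ permutations off those colourings freely ($((d-1)!)^d$ ways); a short argument (two disjoint sets of $d$ colourings cannot both occur, and the set of colourings in an obstruction cannot be enlarged) shows there is no over-counting. Hence a single $b_j$ -- whatever its correspondences -- destroys \emph{exactly} $\binom{2d-1}{d}^2((d-1)!)^dN(d)=((2d-1)!)^d/x(d)$ of the $((2d-1)!)^d$ possible $\Phi|_A$. For part~\ref{itm:1}, a union bound over $b_1,\dots,b_t$ shows that when $t<x(d)$ not all $\Phi|_A$ are destroyed, so every list-$(2d-1)$ correspondence assignment to $K_{d,t}$ packs, giving $\chi_c^\star(K_{d,t})\le 2d-1<2d$.

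For part~\ref{itm:2} I would attack the dual covering problem by the probabilistic method: draw the correspondences at $b_1,\dots,b_t$ independently and uniformly at random. The group of colour-relabellings at the vertices of $A$ acts transitively on the tuples $\Phi|_A$ and commutes with this distribution, so by the exact count above each fixed $\Phi|_A$ is destroyed by a given $b_j$ with probability exactly $1/x(d)$, hence survives all of $b_1,\dots,b_t$ with probability $(1-1/x(d))^t\le e^{-t/x(d)}$. A union bound over the $((2d-1)!)^d$ choices of $\Phi|_A$ shows that once $t\ge x(d)\cdot d\log((2d-1)!)$, with positive probability every $\Phi|_A$ is destroyed, i.e.\ there is a list-$(2d-1)$ correspondence assignment with no packing; with $\chi_c^\star(K_{d,t})\le 2\delta^\star=2d$ (and smaller list sizes handled analogously, since they only make the forbidden slots smaller) this gives $\chi_c^\star(K_{d,t})=2d$.

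The main obstacle I expect is the structural step in part~\ref{itm:1}: converting the Hall deficiency condition into the rigid ``Latin square'' obstruction via the pigeonhole count on colour multiplicities, and then verifying the enumeration with neither over- nor under-counting. Everything afterwards -- the union bound in part~\ref{itm:1} and the transitivity-plus-union-bound in part~\ref{itm:2} -- is routine; and it is precisely the \emph{exactness} of the per-vertex destruction count, uniform over all correspondences, that lets the probabilistic argument for part~\ref{itm:2} run with the clean constant $1/x(d)$ rather than only an inequality.
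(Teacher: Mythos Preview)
Your proof is correct and matches the paper's approach closely. Part~\ref{itm:1} is essentially identical: normalise the correspondences at a fixed $b_j$ to the identity, reduce via Hall (using the minimum-degree $d-1$ observation on both sides of the auxiliary bipartite graph) to the rigid Latin-square obstruction, count exactly $w(d)=\binom{2d-1}{d}^2((d-1)!)^d N(d)$ forbidden partial packings per vertex of $B$, note uniqueness of the pair $(C,J)$ so there is no over-count, and apply a union bound.

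For part~\ref{itm:2} the paper uses the derandomised (greedy) form of your probabilistic argument: at each step it chooses the correspondences for the next $b_j$ so as to destroy at least the average fraction $1/x(d)$ of the surviving $\Phi|_A$, obtaining $X_s\le(1-1/x)^s X_0$, and then finishes off the last $\lfloor x\rfloor$ survivors one at a time. Both arguments rest on the same symmetry observation (every $\Phi|_A$ is destroyed by exactly a $1/x(d)$ fraction of the possible correspondence choices at a single $b_j$), so the difference is stylistic; your random construction plus union bound is arguably cleaner, while the paper's greedy version is explicit.

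One small point: your parenthetical about ``smaller list sizes handled analogously'' is unnecessary and slightly misleading. Exhibiting a single $(2d-1)$-fold cover with no packing already yields $\chi_c^\star(K_{d,t})\ge 2d$; together with $\chi_c^\star(K_{d,t})\le 2\delta^\star(K_{d,t})=2d$ this is all that is required, and nothing about list sizes below $2d-1$ enters.
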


By monotonicity, there also exists a threshold function $T'(d)$ ensuring that $\chi_{c}^{\star}(K_{d,t}) \ge 2d-1$ if and only if $t \ge T'(d).$
In Section~\ref{sec:thresholdfunction_2s-1}, we use a more precise counting argument to obtain an upper bound for $T'(d)$.

\begin{theorem}\label{thr:threshold_2d-1}
For $$y(d)=\frac{((2d-2)!)^d}{N(d)\binom{2d-2}{d}\binom{2d-2}{d-1}\left( 2((d-1)!)^d-(d-1)^2\left( d-1+\frac1d\right)((d-2)!)^d\right)},$$

we have the following:

\begin{equation*}
\chi_{c}^{\star}(K_{d,t}) \ge 2d-1 ~~\text{if}~~ t>y(d) \cdot d \log\left( (2d-2)! \right ).
\end{equation*}

\end{theorem}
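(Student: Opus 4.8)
The plan is to mirror the counting/probabilistic argument that underlies the proof of Theorem \ref{thr:threshold_2d}\eqref{itm:2}, but with the sharper bookkeeping alluded to in the paragraph preceding the statement. Recall that $\chi_c^\star(K_{d,t})\ge 2d-1$ means that there exists a correspondence assignment with lists of size $2d-2$ that admits no packing into $2d-2$ disjoint proper correspondence colourings. So I would fix list size $k=2d-2$ and try to show that, once $t$ is as large as claimed, such a bad assignment must exist. The natural device (as in \cite{CCDK21,Mudrock18}) is to identify, for the side of size $d$, a single colouring-pattern — essentially a choice of how the $d$ vertices pick colours from their lists of size $2d-2$ — and count how many correspondence assignments on the $t$ vertices of the large side are ``compatible'' with that pattern in a way that blocks completing a packing; equivalently, count bipartite configurations via Latin-square-type structures on $\{1,\dots,2d-2\}$ restricted to $d$ rows.

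The key steps, in order, are: (i) set up the correspondence assignment between the size-$d$ side $A$ and each vertex $v$ of the size-$t$ side $B$ as a bijection (``matching'') on the colour set of size $2d-2$, so that a proper colouring of $A$ corresponds to choosing one colour per vertex of $A$ avoiding the forbidden pairs to each $v$; (ii) count the number $M$ of ``obstructing'' configurations on a single $B$-vertex — this is where the quantity $2((d-1)!)^d-(d-1)^2(d-1+\tfrac1d)((d-2)!)^d$ comes from, as an inclusion–exclusion refinement counting partial systems of distinct representatives / Latin-rectangle extensions that still leave room, as opposed to the cruder $((2d-2)!)^d$ total and the $N(d)$ Latin squares; (iii) observe that if every one of the $t$ vertices of $B$ is assigned such an obstructing configuration then no packing of size $2d-2$ exists, so by a union-bound / pigeonhole over the $\binom{2d-2}{d}\binom{2d-2}{d-1}$-many ``types'' of partial colourings of $A$ (these binomials encode which $d$-subset of colours is used and which $(d-1)$-subset survives), a bad assignment is guaranteed as soon as $t$ exceeds $y(d)$ times the entropy factor $d\log((2d-2)!)$; (iv) the extra $d\log((2d-2)!)$ factor is exactly the cost of the union bound, turning an ``expected obstruction'' statement into a ``for all partial colourings, some obstruction'' statement, precisely as in Theorem \ref{thr:threshold_2d}\eqref{itm:2}.

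The main obstacle I expect is step (ii): getting the precise count $2((d-1)!)^d-(d-1)^2\left(d-1+\tfrac1d\right)((d-2)!)^d$ rather than a loose bound. This requires carefully classifying the ``bad'' correspondence matchings at a single vertex according to how the $d$ chosen colours collide under the $d$ bijections, and running a two-term inclusion–exclusion over which pair(s) of $A$-vertices are forced onto the same colour; the factor $(d-1)^2$ and the curious $d-1+\tfrac1d = \tfrac{d^2-d+1}{d}$ strongly suggest a count of ordered pairs of coincidences with a correction for the case where a third coincidence is implied. I would first do $d=2,3$ by hand to pin down the combinatorial meaning of each term, then promote it to general $d$. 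Once the single-vertex count $M$ is established, bounding $(1-M/((2d-2)!)^d)^{?}$ or applying the first-moment/union-bound over the $N(d)\binom{2d-2}{d}\binom{2d-2}{d-1}$ relevant structures and solving for $t$ is routine and yields the stated $y(d)\cdot d\log((2d-2)!)$ threshold.
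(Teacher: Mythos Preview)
Your high-level plan is right: the proof does mirror Theorem~\ref{thr:threshold_2d}\eqref{itm:2}, with the only new ingredient being an exact count of the \emph{forbidden} $d\times(2d-2)$ packing matrices (partial packings of the size-$d$ side $U$ that cannot be extended at a fixed vertex $v$ under the standard matchings). Once that number $w(d)$ is known, the greedy averaging argument --- pick matchings at each $v_s$ to kill at least a $1/y(d)$ fraction of the surviving partial packings --- goes through verbatim and yields the $y(d)\cdot d\log((2d-2)!)$ bound. So steps (i), (iii), (iv) of your outline are essentially fine, modulo the remark that it is a greedy/averaging construction rather than a union bound, and that the binomials $\binom{2d-2}{d}\binom{2d-2}{d-1}$ and $N(d)$ are part of the \emph{count of forbidden matrices}, not an index set you union-bound over.

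The real gap is in your step (ii). Your proposed mechanism --- classify bad matchings by ``which pair(s) of $A$-vertices are forced onto the same colour'' and run a two-term inclusion--exclusion over coincidences --- is not what produces the formula, and I don't see how it would. The correct classification comes from Hall's condition: a $d\times(2d-2)$ packing matrix $M$ is forbidden precisely when some $a$ rows of $M$ jointly use only $b<a$ values in the complement sense, and because the minimum degree in the associated bipartite graph is $d-2$, the only possible obstruction shapes are $(a,b)\in\{(d-1,d-2),(d,d-2),(d,d-1)\}$ (this is Lemma~\ref{lem:checkingHallCondition_graphversion_2}). Each such obstruction forces a Latin rectangle inside $M$ on a chosen set of rows $J$ and value set $C$; the binomials count the choices of $(C,J)$, the factor $N(d)$ counts the Latin rectangle, and the $((d-1)!)^d$ or $((d-2)!)^d$ counts the free completions. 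The three obstruction types are counted separately and then combined by inclusion--exclusion (a $(d,d-2)$ obstruction is overcounted $d$ times among the $(d-1,d-2)$ ones; a $(d,d-1)$ obstruction must be corrected for those that also carry a $(d-1,d-2)$ obstruction), and \emph{that} is where the term $2((d-1)!)^d-(d-1)^2(d-1+\tfrac1d)((d-2)!)^d$ comes from. Your ``pairs of coincidences'' picture does not capture this Latin-rectangle structure, so you would not arrive at the right count by that route.
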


By observing that the upper bound for $T'(d)$ is smaller than the lower bound for $T(d)$, whenever $d \ge 3$, we conclude that every odd number which is at least $5$, equals the correspondence packing number of some complete bipartite graph.
Thus as a corollary, we conclude our main result, Theorem~\ref{thr:main}.
These computations and some more precise bounds are given in Section~\ref{sec:Im=Z3}.

Theorem~\ref{thr:threshold_2d} and Theorem~\ref{thr:threshold_2d-1} are both results on the correspondence packing number of asymmetric (complete) bipartite graphs. This may inspire the determination of list- and correspondence packing numbers of asymmetric bipartite graphs in more detail, as has been initiated for the list chromatic number~\cite{ACK20+, Zhu22+} and the correspondence chromatic number~\cite{CaKa20+, Mudrock18} before. This is an asymmetric case of some work in~\cite{ERT80}. 
For list packing, as a base case with one partition class being of size at most $3$, we prove in Section~\ref{sec:chi_ell^*(K3t)} that $\chi_{\ell}^\star(K_{3,t})=3+1_{t\ge 9}$ for every $t \ge 2.$
This refutes a recent conjecture~\cite[conj.~15]{KMMP22} on flexible list colouring, a concept introduced in~\cite{DNP19} where one aims to find a list-colouring that satisfies as many requests as possible.

Finally, some related problems and conjectures are given in Section~\ref{sec:conc}.

\subsection{Definitions, preliminaries and notation}\label{subsec:not&def}

Given a graph $G=(V,E)$, a \textit{list-assignment} $L$ of $G$ is a mapping $L \colon V \rightarrow 2^{\mathbb Z^+}$ where for every $v\in V(G)$ $L(v)$ is a subset of the positive integers (list of colours) associated to the vertex. Given $k\in\mathbb{Z}^{+}$, a \textit{k-list-assignment} is defined as a list-assignment in which all lists have cardinality $k$. A \textit{proper L-colouring} is defined as a mapping $c:V(G)\rightarrow \mathbb{Z}^{+}$ such that $c(v)\in L(v)$ for each $v\in V(G)$ and whenever $uv$ is an edge of $G$ we have $c(u)\neq c(v)$. In other words, $c$ defines a proper colouring of $V(G)$ such that each vertex is coloured by a colour of its list. The \textit{list chromatic number} $\chi_{\ell}(G)$ is defined as the least $k$ such that $G$ admits a proper $L$-colouring for any $k$-list-assignment $L$ of $G$. Note that $\chi_{\ell}(G)$ is always at least the chromatic number $\chi(G)$ for any graph $G$. 

Given a list-assignment $L$ of $G$, an \textit{$L$-packing} of size $k$ is defined as a collection of $k$ mutually disjoint $L$-colourings $\{c_1,...,c_k\}$ of $G$, that is, $c_i(v)\neq c_j(v)$ for any $i\neq j$ and any $v\in V(G)$. The $L$-packing is proper if each of the disjoint $L$-colourings is proper. The \textit{list (chromatic) packing number} $\chi_{\ell}^{\ast}(G)$ of $G$ is the least $k$ such that $G$ admits a proper $L$-packing of size $k$ for any $k$-list-assignment $L$ of $G$. We use the term \textit{$k$-fold} to denote a list-assignment with $\abs{L(v)}=k$ for every $v \in V(G)$. 
The list packing number $\chi_{\ell}^{\ast}(G)$ must be at least $\chi_{\ell}(G)$ for any graph $G$.

A \emph{cover} $H$ of G with respect to a mapping
$L \colon V(G) \to V(H)$ is a graph $H$ for which
$(L(v))_{v \in G}$ is a partition of $V(H)$ and the subgraph between
$L(u)$ and $L(v)$ is empty whenever $uv \not \in E(G).$ More formally, the cover is the pair $\sH=(L,H)$ and the vertex corresponding with $c \in L(v)$ is denoted $c_v.$
For formal definitions of these terms, see also~\cite{CCDK21, CCDK22+} (with the latter being more formal than the former).
To make the notation lighter, we refer to $L$ or $H$ in certain cases, when implicitly referring to $\sH=(L,H).$
E.g. when focusing on correspondence colouring, we assume that the list-assignment $L$ is such that when $\abs{L(v)}=k$ for $v \in V(G),$ then $L(v)=\{1, 2, \ldots, k\}=[k]$, and the associated vertices in $H$ will be presented as $\{1_v, 2_v, \ldots, k_v\}.$
Whenever necessary to know which vertex in $H$ we are referring to, we will also write $L(v)=\{1_v, 2_v, \ldots, k_v\}$ for clarity.

A \textit{correspondence-assignment} for a graph $G$ is a cover $H$ for $G$ via $L$ such that for each edge $uv\in E(G)$ the induced subgraph between $L(u)$ and $L(v)$ is a \textit{matching} (an independent edge set). In the latter case, we call $H$ a \textit{correspondence-cover} for $G$ with respect to $L$. A matching is a \emph{perfect matching} if it contains all the vertices in the graph. 
A \textit{correspondence L-colouring} of $H$ is an independent transversal (IT) of $H$ with respect to $L$, containing one vertex of $L(v)$ for every $v \in V(G).$ The \textit{correspondence chromatic number} (or \textit{DP chromatic number}) $\chi_{c}(G)$ is the least $k$ such that any $k$-fold correspondence-cover $H$ of $G$ via $L$ admits a correspondence-$L$-colouring, as defined in \cite{DvPo18}. Given a $k$-fold correspondence-cover, a \textit{$k$-fold correspondence L-packing} of $H$ is a $k$-fold IT-packing of $H$ with respect to $L$, i.e., $k$ disjoint correspondence-$L$-colourings.
The \textit{correspondence (chromatic) packing number} $\chi_{c}^{\ast}(G)$ of $G$ is defined as the least $k$ such that any $k$-fold correspondence-cover $H$ of $G$ via $L$ admits a $k$-fold correspondence-$L$-packing. Note that every list-cover can also be represented as a correspondence-cover. Furthermore, for any list-cover $H$ of $G$ via $L$, the correspondence-$L$-colourings of $H$ must be in bijective correspondence with the proper $L$-colourings of $G$. Note that $\chi_{c}(G)$ must be at least $\chi_{\ell}(G)$ and moreover, $\chi_{c}^{\ast}(G)$ must be at least $\max\{\chi_{\ell}^{\ast}(G),\chi_{c}(G) \}.$

\underline{Note on spelling.} Similar to~\cite{CCDK21,CCDK22+}, we write list colouring and correspondence colouring when referring to the topic and list-colouring and correspondence-colouring when referring to an instance. The same holds for the packing variants and the term list-assignment. The various packing numbers never get a hyphen.

We use the standard notation $[k]$ for the set $\{1,2,\ldots, k\}$ and $\binom{[k]}{m}$ for the family of all $m$-subsets of $[k].$
A permutation $\sigma$ of the set $[k]$ is called a derangement if for every $1 \le i \le k$, $\sigma(i) \neq i$. Two permutations $\sigma$ and $\tau$ of $[k]$ are said to be derangements of each other if $\sigma(i) \neq \tau(i)$ for every $1 \le i \le k$.

A Latin square of order $n$ is a $n \times n$ array $A = (a_{i,j})_{1 \le i,j \le n}$ with entries from
$[n]$ such that the entries in each row and in each column are distinct.
For any $k \le n$, when $A$ is a Latin square of order $n$, the array $A_{[k]\times [n]}$ is a $k \times n$ Latin rectangle.
Let $N(n)$ be the number of Latin squares of order $n$.
Already in $1975$, Alter~\cite{Alter75} asked to determine $N(n).$
Shao and Wei~\cite{SW92} found a formula in terms of binomials involving the permanent of binary $n \times n$ matrices and McKay and Wanless~\cite{MW05} had an alternative in terms of $n \times n$ $\{-1,+1\}$-matrices and computed it for $n$ up to $11.$
A trivial observation is that the number of Latin squares of order $n$ equals the number of $(n-1)\times n$ Latin rectangles, as there is a bijection between them (the final row is determined uniquely).

Finally, we recall Hall's marriage theorem:

\begin{theorem}\label{hall}(Hall \cite{Hal35})
Given a family $\mathcal{F}$ of finite subsets of some ground set $X$, where the subsets are counted with multiplicity, suppose $\mathcal{F}$ satisfies the \textit{marriage condition} i.e. for each $\mathcal{F}'\subseteq \mathcal{F}$ it holds that

\begin{equation*}
 |\mathcal{F}'|\le \Bigg|\bigcup_{A\in \mathcal{F}'} A\Bigg|.
\end{equation*}

Subsequently, there exists an injective function $f:\mathcal{F}\rightarrow X$ such that $f(A)$ is an element of the set $A$ for each $A\in\mathcal{F}$ i.e. the image $f(\mathcal{F})$ is a system of distinct representatives of $\mathcal{F}$.
\end{theorem}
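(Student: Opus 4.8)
The plan is to prove the statement by induction on the cardinality $n=\abs{\mathcal{F}}$ of the family (finite in all of our applications), treating $\mathcal{F}$ as indexed by $\{1,\dots,n\}$ so that the counted-with-multiplicity convention simply means we work with these indices; ``injective'' then means distinct indices receive distinct representatives, regardless of whether the underlying sets coincide. The base case $n=1$ is immediate: the marriage condition applied to $\mathcal{F}'=\mathcal{F}$ forces the single set to be nonempty, and we send it to any of its elements. For the inductive step I assume the theorem for all families of size less than $n$ and split into two cases according to how tightly the marriage condition is satisfied on proper subfamilies.

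In the first case, suppose every nonempty proper subfamily $\mathcal{F}'\subsetneq\mathcal{F}$ satisfies the condition with \emph{slack}, i.e.\ $\abs{\mathcal{F}'}<\abs{\bigcup_{A\in\mathcal{F}'}A}$. Here I pick an arbitrary $A_0\in\mathcal{F}$, choose any $x\in A_0$, set $f(A_0)=x$, and pass to the family $\mathcal{F}^-=\mathcal{F}\setminus\{A_0\}$ on ground set $X\setminus\{x\}$, replacing each set $A$ by $A\setminus\{x\}$. The key point to verify is that $\mathcal{F}^-$ still satisfies the marriage condition: for any nonempty $\mathcal{G}\subseteq\mathcal{F}^-$, removing $x$ shrinks the union by at most one, while the slack hypothesis (applicable since $\mathcal{G}$ is a proper subfamily of $\mathcal{F}$) gives $\abs{\bigcup_{A\in\mathcal{G}}A}\ge\abs{\mathcal{G}}+1$, so $\abs{\bigcup_{A\in\mathcal{G}}(A\setminus\{x\})}\ge\abs{\mathcal{G}}$. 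Since $\abs{\mathcal{F}^-}=n-1$, induction yields an SDR for $\mathcal{F}^-$ avoiding $x$, which together with $f(A_0)=x$ is an SDR for $\mathcal{F}$.

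In the second case there is a nonempty proper subfamily $\mathcal{F}_0\subsetneq\mathcal{F}$ for which the condition is \emph{tight}, $\abs{\mathcal{F}_0}=\abs{Y_0}$ where $Y_0:=\bigcup_{A\in\mathcal{F}_0}A$. By induction $\mathcal{F}_0$ has an SDR, and tightness forces this SDR to be a bijection onto $Y_0$. I then consider the complementary family $\mathcal{F}_1=\mathcal{F}\setminus\mathcal{F}_0$ on the reduced ground set $X\setminus Y_0$, each $A$ replaced by $A\setminus Y_0$. For any $\mathcal{G}\subseteq\mathcal{F}_1$ one applies the original marriage condition to $\mathcal{F}_0\cup\mathcal{G}$ and subtracts the $\abs{Y_0}=\abs{\mathcal{F}_0}$ elements of $Y_0$ to obtain $\abs{\bigcup_{A\in\mathcal{G}}(A\setminus Y_0)}\ge\abs{\mathcal{G}}$; thus $\mathcal{F}_1$ satisfies the marriage condition and, having fewer than $n$ members, admits an SDR within $X\setminus Y_0$ by induction. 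Because the two SDRs draw representatives from the disjoint sets $Y_0$ and $X\setminus Y_0$, their union is injective and gives the desired SDR for $\mathcal{F}$.

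The main obstacle is precisely the bookkeeping that keeps the marriage condition intact after each reduction: in the slack case it is the strict inequality that compensates for deleting one element, while in the tight case it is the equality $\abs{\mathcal{F}_0}=\abs{Y_0}$ that both makes the first SDR exhaust $Y_0$ exactly and makes the union estimate for $\mathcal{F}_1$ go through. Once these two reductions are justified, the induction closes. (Should one wish to allow an infinite family in which every set is finite, the same finite statement extends by a routine compactness argument, but this generality is not needed here.)
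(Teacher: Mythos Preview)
Your proof is correct: this is the classical Halmos--Vaughan inductive argument, and both reductions (the slack case and the tight case) are handled properly. Note, however, that the paper does not give its own proof of this statement; Theorem~\ref{hall} is stated with a citation to Hall~\cite{Hal35} as a known result and is used as a tool rather than proved, so there is nothing in the paper to compare your argument against.
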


\section{On the threshold function for $t$ ensuring that $\chi_c^\star(K_{d,t})=2d$}\label{sec:thresholdfunction_2s}

In this section, we will prove Theorem~\ref{thr:threshold_2d}.
Let $K_{d,t}=(U \cup V, E)$ with $U=\{u_1,u_2, \ldots ,u_d\}$ and $V=\{v_1,v_2,\ldots, v_t\}$ be a complete bipartite graph.
If we consider an $\ell$-fold correspondence-cover $H$ of $K_{d,t}$, then we assign lists $L(u_i)=\{1_{u_i}, 2_{u_i},\ldots, \ell_{u_i}\}$ and $L(v_j)=\{1_{v_j}, 2_{v_j},\ldots, \ell_{v_j}\}$ for every $i \in [d], j \in [t]$ and there are some matchings between the lists. 
As mentioned in Subsection~\ref{subsec:not&def}, we will omit subscripts in the lists of vertices in most of the remaining of this section.
This will allow us to work purely with permutations of $[\ell]$. 

Note that in the cover graph, one can add edges to every non-perfect matching between two lists and as such, extending them to perfect matchings. Since additional edges (constraints) imply that finding a correspondence-packing is harder, we can assume this is always the case.
Every matching between $L(u_i)$ and $L(v_j)$ corresponds with a permutation $\sigma_{i,j}$ on $[\ell]$ for which
$\{k, \sigma_{i,j}(k)\}=\{k_{u_i}, \sigma_{i,j}(k)_{v_j}\} \in E(H)$ for every $k \in [\ell].$

Let $c_1, \ldots, c_{\ell}$ be the $\ell$ colourings of $K_{d,t}$ that we aim to create in order to form a corresponding packing, meaning that they must be disjoint and proper.
For every vertex $u_i,$ we denote with the vector $\vec c(u_i)=( c_1(u_i), \ldots, c_{\ell}(u_i) )$ the different colours (vertices of the cover graph) assigned to $u_i$ by these $\ell$ colourings.
Omitting the subscript $u_i$, every $\vec c(u_i)$ will be a permutation of $[\ell].$
The same is true for the $v_j.$

A correspondence-packing will exist if and only if there is a choice for $\left(\vec c(u_i)\right)_{1 \le i \le d}$ such that for every $v_j,$ we can find a vector $\vec c(v_j)$ such that no neighbours in $H$ belong to the same colouring, i.e. for every $i \in [d], j \in [t],$ $\sigma_{i,j}\vec c(u_i)$ and $\vec c(v_j)$ are derangements.

If the matchings $\sigma_{i,j}$ (corresponding to a permutation of $[\ell]$)
 for $1 \le i \le d, 1 \le j \le t$ are the identity on $[\ell]$,
we call this the \emph{standard correspondence-cover} of $K_{d,t}$ (with list-sizes $\ell$).

\subsection{Prelimary explanation}\label{subsec:prelim_d2}

Since $\chi_c^\star(K_{d,t})\le 2d$ by~\cite[Prop.~24]{CCDK21}, we need to show that $\chi_c^\star(K_{d,t})> 2d-1$ for a particular $t$.

That is, we need to prove that there exists a $(2d-1)$-fold correspondence-cover for which no proper corresponding packing exists.
We assume that there are perfect matchings between $L(u_i)$ and $L(v_i)$ for every choice $(i,j) \in [d] \times [t]$ and that for every $k \in [2d-1]$ and $i\in [d]$ we have $\{k_{u_i}, k_{v_1} \}\in E(H).$

We now first point out what is happening in the case $d=2$ (for $d=1$ there is nothing to do).

\begin{figure}[h]
 \begin{center}
 \begin{tikzpicture}
 {
 
 \draw (0,1) ellipse (0.6cm and 1.35cm);
 \draw (0,5) ellipse (0.6cm and 1.35cm);
 \draw (5,1) ellipse (0.6cm and 1.35cm);
 \draw (5,5) ellipse (0.6cm and 1.35cm);
 
 \draw (-1,1) node {\large $v_1$};
 \draw (-1,5) node {\large $v_2$};
 \draw (6,1) node {\large $u_1$};
 \draw (6,5) node {\large $u_2$};
					
	\foreach \x in {0,1,2}{
	\draw[thick] (0,\x) -- (5,\x);
	\draw[thick] (0,\x) -- (5,4+\x);}
	
	\foreach \x in {0,1,2}{
	\draw[thick, style=dotted] (0,4+\x) -- (5,\x);
	}

	\draw[thick, style=dotted] (0,4)--(5,4);
\draw[thick, style=dotted] (0,5)--(5,6);
\draw[thick, style=dotted] (0,6)--(5,5);
	
		\foreach \x in {0,1,2,6,4,5}{\draw[fill] (0,\x) circle (0.1);
		\draw[fill] (5,\x) circle (0.1);}
				
	}
	\end{tikzpicture}
	\end{center}
 \caption{A correspondence-cover of $K_{2,2}=(\{u_1,u_2\} \cup \{v_1,v_2\} ,E)$ indicating that $\chi_c^\star(K_{2,2})>3$.}
 \label{fig:chicK22=4}
\end{figure}

Consider the partial correspondence-packing of $U$, i.e. $\vec c(u_i)=(c_1(u_i),c_2(u_i),c_3(u_i))$ is a permutation of $L(u_i)$ for every $u_i \in U.$
This partial packing can be extended at $v_1$ if we can find a permutation $\vec c(v_1)$ that is a derangement of both $\vec c(u_1)$ and $\vec c(u_2).$
Let $S_o=\{(2,1,3), (1,3,2),(3,2,1)\}$ be the set of odd permutations and $S_e=\{(1,2,3), (2,3,1),(3,1,2)\}$ be the set of even permutations of $(1,2,3).$
By Hall's matching theorem~\ref{hall}, such an extension is not possible if and only if there are coordinates $i,j$ such that $c_i(u_1)=c_j(u_2)$ and $c_j(u_1)=c_i(u_2)$.
This is equivalent with
$\vec c(u_1)\vec c(u_2)$ being an odd permutation of $[3]$, i.e. $\vec c(u_1) \in S_o$ and $\vec c(u_2) \in S_e$ or $\vec c(u_1) \in S_e$ and $\vec c(u_2) \in S_o.$ 
There are $18$ choices for $\vec c(u_1), \vec c(u_2)$ satisfying this, so $18$ out of $(3!)^2=36$ possibilities in total for the pair of permutations $\vec c(u_1), \vec c(u_2)$ cannot be extended in $v_1$ to form a partial packing.
The same counting argument is true for $v_2$.
Since the $2$ sets of bad pairs of permutations can be disjoint, e.g. when the matchings are as given in Figure~\ref{fig:chicK22=4}, we conclude that $T(2)=2.$
In the latter case, the partial packing cannot be extended in $v_2$ if and only if $\vec c(u_1), \vec c(u_2) \in S_o$ or $\vec c(u_1), \vec c(u_2) \in S_e$.
For full understanding of this base case, in Figure~\ref{fig:table36permutations} we have listed all possibilities of $\vec c(u_1)$ and $\vec c(u_2)$, with the two vectors presented in the columns of a matrix, classified according to which one of the two vertices in $V$ cannot be extended.

\begin{figure}[h]
\begin{center}
 \begin{tabular}{ | l | c | r | }
 \hline
 no choice for $\vec c(v_1)$ & \makecell{\vspace{0.1\baselineskip}\\$\begin{pmatrix}
 1 & 2 \\
 2 & 1 \\
 3 & 3
 \end{pmatrix}$,$\begin{pmatrix}
 1 & 1 \\
 2 & 3 \\
 3 & 2
 \end{pmatrix}$,$\begin{pmatrix}
 1 & 3\\
 2 & 2\\
 3 & 1
 \end{pmatrix}$,$\begin{pmatrix}
 2 & 2 \\
 3 & 1\\
 1 & 3
 \end{pmatrix}$,$\begin{pmatrix}
 2 & 1 \\
 3 & 3\\
 1 & 2
 \end{pmatrix}$, $\begin{pmatrix}
 2 & 3 \\
 3 & 2 \\
 1 & 1
 \end{pmatrix}$
 \\\vspace{0.1\baselineskip}\\
 $\begin{pmatrix}
 3 & 2 \\
 1 & 1 \\
 2 & 3
 \end{pmatrix}$,$\begin{pmatrix}
 3 & 1\\
 1 & 3\\
 2 & 2
 \end{pmatrix}$,$\begin{pmatrix}
 3 & 3\\
 1 & 2\\
 2 & 1
 \end{pmatrix}$,$\begin{pmatrix}
 2 & 1 \\
 1 & 2\\
 3 & 3
 \end{pmatrix}$,$\begin{pmatrix}
 2 & 2\\
 1 & 3\\
 3 & 1
 \end{pmatrix}$,$\begin{pmatrix}
 2 & 3\\
 1 & 1\\
 3 & 2
 \end{pmatrix}$\\\vspace{0.1\baselineskip}\\$\begin{pmatrix}
 1 & 1 \\
 3 & 2\\
 2 & 3
 \end{pmatrix}$,$\begin{pmatrix}
 1 & 2 \\
 3 & 3 \\
 2 & 1
 \end{pmatrix}$, $\begin{pmatrix}
 1 & 3 \\
 3 & 1 \\
 2 & 2
 \end{pmatrix}$,$\begin{pmatrix}
 3 & 1\\
 2 & 2\\
 1 & 3
 \end{pmatrix}$,$\begin{pmatrix}
 3 & 2\\
 2 & 3\\
 1 & 1
 \end{pmatrix}$,$\begin{pmatrix}
 3 & 3\\
 2 & 1\\
 1 & 2
 \end{pmatrix}$\\\vspace{0.1\baselineskip}}\\
 \hline
 no choice for $\vec c(v_2)$ & \makecell{\vspace{0.1\baselineskip}\\$\begin{pmatrix}
 1 & 1 \\
 2 & 2 \\
 3 & 3
 \end{pmatrix}$,$\begin{pmatrix}
 1 & 2 \\
 2 & 3 \\
 3 & 1
 \end{pmatrix}$,$\begin{pmatrix}
 1 & 3\\
 2 & 1\\
 3 & 2
 \end{pmatrix}$,$\begin{pmatrix}
 2 & 1 \\
 3 & 2\\
 1 & 3
 \end{pmatrix}$,$\begin{pmatrix}
 2 & 2 \\
 3 & 3\\
 1 & 1
 \end{pmatrix}$, $\begin{pmatrix}
 2 & 3 \\
 3 & 1 \\
 1 & 2
 \end{pmatrix}$
 \\\vspace{0.1\baselineskip}\\
 $\begin{pmatrix}
 3 & 1 \\
 1 & 2 \\
 2 & 3
 \end{pmatrix}$,$\begin{pmatrix}
 3 & 2\\
 1 & 3\\
 2 & 1
 \end{pmatrix}$,$\begin{pmatrix}
 3 & 3\\
 1 & 1\\
 2 & 2
 \end{pmatrix}$,$\begin{pmatrix}
 2 & 2 \\
 1 & 1\\
 3 & 3
 \end{pmatrix}$,$\begin{pmatrix}
 2 & 1\\
 1 & 3\\
 3 & 2
 \end{pmatrix}$,$\begin{pmatrix}
 2 & 3\\
 1 & 2\\
 3 & 1
 \end{pmatrix}$\\\vspace{0.1\baselineskip}\\$\begin{pmatrix}
 1 & 2 \\
 3 & 1\\
 2 & 3
 \end{pmatrix}$,$\begin{pmatrix}
 1 & 1 \\
 3 & 3 \\
 2 & 2
 \end{pmatrix}$, $\begin{pmatrix}
 1 & 3 \\
 3 & 2 \\
 2 & 1
 \end{pmatrix}$,$\begin{pmatrix}
 3 & 2\\
 2 & 1\\
 1 & 3
 \end{pmatrix}$,$\begin{pmatrix}
 3 & 1\\
 2 & 3\\
 1 & 2
 \end{pmatrix}$,$\begin{pmatrix}
 3 & 3\\
 2 & 2\\
 1 & 1
 \end{pmatrix}$\\\vspace{0.1\baselineskip}}\\
 \hline
 \end{tabular}
\end{center}
\caption{A table of choices for $\vec c(u_1)~\text{and}~\vec c(u_2)$ for which the partial packing does not extend in $v_1$ and respectively in $v_2$.}\label{fig:table36permutations}
\end{figure}

In the next subsection, we apply the same idea for general $d$.
Nevertheless, for $d \ge 3$ it is much harder to construct the best set of matchings between every $L(v_j)$ and the $L(u_i)$ and as such we only give an estimate of the threshold function. 
As evidence for that, we note that $x(d)$ is not always integral for large $d$ (e.g. for $7 \le d \le 11$).

\subsection{Approximation of the threshold function $T(d)$ for $d \ge 3$}

As was the case in the previous subsection, we focus on the forbidden partial packings of the bipartition class of size $d$, given the matchings with one particular vertex $v$. For the reader's convenience, we will repeat the statement of theorem~\ref{thr:threshold_2d}.

\begin{customthm} {\bf \ref{thr:threshold_2d}}
 Let $x(d)= \frac{ ((2d-1)!)^d }{\binom{2d-1}{d}^2((d-1)!)^d N(d)}$.
 Then 
 \begin{enumerate}
 \item $\chi_{c}^{\star}(K_{d,t}) <2d$ if $t<x(d)$ and 
 \item $\chi_{c}^{\star}(K_{d,t}) =2d$ if $ t \ge x(d)\cdot d \log\left( (2d-1)! \right ).$
 \end{enumerate} 
\end{customthm}

\begin{proof}[Proof of Theorem~\ref{thr:threshold_2d}~\eqref{itm:1}]

Take any $t < x(d)$ and correspondence-cover of $K_{d,t}=(U \cup V,E).$ Let $U=\{u_1,u_2, \ldots ,u_d\}$ and $V=\{v_1,v_2, \ldots ,v_t\}.$
If some matching is not a perfect matching, we can extend it to a perfect matching. If there was no correspondence-packing for a cover with partial matchings, then there will not be none in the one with extended matchings.

We now focus on one vertex $v := v_j$.
Since the counting argument is not influenced by the order of the elements in the lists when defining the matchings, we can assume that the edges in the cover of $K_{1,d}=(U \cup \{v\},E')$ are precisely $\{j_{u_i},j_v\}$ for $1 \le i \le d, 1 \le j \le 2d-1,$ i.e., the standard perfect matching.
By Hall's Marriage theorem, the partial correspondence-packing of $U$ cannot be extended in $v$ if and only if there is a subset of $d$ integers which can only be assigned to $d-1$ indices (i.e. colourings) of $\vec c(v).$
This is immediate from the graph version as stated in~\cite[Lem.~17]{CCDK22+}.
If we consider the $(2d-1)\times d$-matrices whose columns correspond with the vectors $(\vec c(u_i))_{i \in [d]}$, the latter condition is equivalent with the existence of subsets $C,J \subset [2d-1]$ of size $d$ for which 
$(c_j(u_i))_{i \in [d], j \in J}$ form a Latin square of order $d$ whose values are in $C$.
Note that no partial correspondence-packing can lead to a contradiction for two different choices of $(C,J).$
There are the total of $\binom{2d-1}{d}^2$ choices for $(C,J)$ and
$N(d)$ choices for the Latin square $(c_j(u_i))_{i \in [d], j \in J}$ once $(C,J)$ are fixed as well as $((d-1)!)^d$ choices for $(c_j(u_i))_{i \in [d], j \in [2d-1]\setminus J}$.
This leads to $$w(d)=\binom{2d-1}{d}^2((d-1)!)^d N(d)$$ partial correspondence-packings that cannot be extended in $v.$
Since this argument was for any choice of $1 \le j \le t,$ the same counting applies to every $v_j.$

We know that the number of partial correspondence-packings that cannot be extended in some $v_j$ is bounded by $t \cdot w(d)< ((2d-1)!)^d, $
as such there is a choice for the partial correspondence-packing on $U$ that can be extended on $V$, i.e. there exists a correspondence-packing.
This implies that $\chi_c^\star(K_{d,t}) \le 2d-1.$
\end{proof}

\begin{proof}[Proof of Theorem~\ref{thr:threshold_2d}~\eqref{itm:2}]
Consider all choices of $d$ possible perfect matchings between a list $L(v)$ and the $d$ lists $L(u_i),$ $1\le i \le d.$
By the proof of Theorem~\ref{thr:threshold_2d}~\eqref{itm:1}, we know that for each combination there are $w(d)$ partial correspondence-packings that cannot be extended in $v.$
Furthermore, as we are ranging over all possible combinations of perfect matchings, every partial correspondence-packing of $U$ cannot be extended the same number of times.

Now, we construct an iterative procedure for a correspondence-cover of $K_{d,t}$ with list-sizes $2d-1$ for some $t \le x(d)\cdot d \log\left( (2d-1)! \right ) $ for which no correspondence-packing exists.
Iteratively, we choose a combination of perfect matchings between $L(v_s)$ and each of the $L(u_i)$.
Let $X_s$ be the number of partial correspondence-packings of $U$ that are extendable in $\{v_1,\ldots, v_{s}\}$.
We will write $w=w(d)$ and $x=x(d)$ in the remaining of the proof.
We will prove the inequality $X_s \le (1-1/x)^s \cdot X_0$ by induction, where $X_0=((2d-1)!)^d.$
There are $w=X_0/x$ partial correspondence-packings forbidden by $u_1$ and thus $X_1=(1-1/x)X_0$. 

On average, $1/x$ of the partial correspondence-packings are forbidden by a combination of perfect matchings. As such, there is a choice of perfect matchings between $L(v_s)$ and each of the $L(u_i)$, for which at least this portion of the remaining $X_{s-1}$ partial correspondence-packings are forbidden.
Choosing this combination of perfect matchings, we conclude that $X_s \le (1-1/x)X_{s-1}\le (1-1/x)^s X_0$ where the second inequality follows from the induction hypothesis.
As such our induction step is complete and hence the result.
Repeating this $\log_{ 1-1/x } \left( x/X_0 \right)$ times, there are at most $x$ partial correspondence-packings which can be extended.
Since the number of forbidden partial correspondence-packings in every step is an integer, we see that with no more than 
\begin{align*}
 \log_{ 1-\frac{1}{x} } \left( \frac{x}{X_0} \right) + x &= \frac{ \log(x)-\log(X_0)}{\log \left(1-\frac{1}{x} \right)} +x\\
 &\le x \left( \log(X_0)-\log(x) +1 \right)\\
 &< x \log(X_0)
\end{align*} 
vertices in $V$, we can construct a correspondence-cover of $K_{d,t}$ which does not admit a correspondence-packing.
Here we used that $\log{ \left(1-1/x \right)} <-1/x $ and $x>3$ when $d \ge 3$ (and thus $\log x >1$).
\end{proof}

\section{On the threshold function for $t$ ensuring that $\chi_c^\star(K_{d,t})\ge 2d-1$}\label{sec:thresholdfunction_2s-1}

In this section, we will establish an upper bound on the threshold value of $t$ such that $\chi_c^\star(K_{d,t})\geq 2d-1$. To do so, we will examine matrices $(\vec{c}(u_i))_{i\in[d]}$, and determine the number of matrices for which the partial packing cannot be extended to $v_1$. We will refer to such matrices as \emph{forbidden}.

\begin{definition}
 We call a $d \times k$-matrix all of whose columns are permutations of $[k]$ a \emph{$d \times k$-packing-matrix}.
 A $d \times k$-packing-matrix $M$ is \emph{forbidden} if there does not exist a permutation of $[k]$ which is a derangement of every column of $M$.
\end{definition}

We will use the following specific version of Hall's matching theorem, as stated in~\cite[Lem.~18]{CCDK22+}, when counting the number of forbidden $d \times (2d-2)$-packing matrices.

\begin{lemma}\label{lem:checkingHallCondition_graphversion_2}
	Let $G=(A \cup B,E)$ be a bipartite graph with partite sets $A$ and $B$ such that $\lvert A \rvert = \lvert B \rvert = 2m$ with minimum degree $m-1$ for some $m \ge 2$.
	Then for every $A_1 \subseteq A$, we have that $\lvert N(A_1) \rvert \ge \lvert A_1 \rvert$ except possibly for 
	\begin{enumerate}
		\item $\lvert A_1 \rvert =m$ and $\lvert N(A_1) \rvert=m-1$
		\item $\lvert A_1 \rvert =m+1$ and $\lvert N(A_1) \rvert=m-1$
		\item $\lvert A_1 \rvert =m+1$ and $\lvert N(A_1) \rvert=m$
	\end{enumerate}
	Let $A_2=A \setminus A_1$, $B_1=N(A_1)$, and $B_2=B\setminus B_1$. 
	Then $G[A_1,B_2]$ is the empty graph, and 
	\begin{enumerate}
		\item $G[A_1,B_1]\cong K_{m,m-1}$,
		\item $G[A_1,B_1]\cong K_{m+1,m-1}$ and $G[A_2,B_2]\cong K_{m-1,m+1}$,
		\item $G[A_2,B_2]\cong K_{m-1,m}$
	\end{enumerate}
\end{lemma}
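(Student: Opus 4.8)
The plan is to prove Lemma~\ref{lem:checkingHallCondition_graphversion_2} by a careful case analysis on a set $A_1 \subseteq A$ that witnesses a failure of Hall's condition, i.e.\ $|N(A_1)| < |A_1|$, and to show that such a set must be of one of the three stated shapes, with the induced subgraphs as described. First I would record the basic degree bounds that will drive everything: since $G$ is bipartite with $|A| = |B| = 2m$ and minimum degree $m-1$, every vertex of $A$ has at least $m-1$ neighbours in $B$ and every vertex of $B$ has at least $m-1$ neighbours in $A$; in particular, if $a \in A_1$ then $N(A_1) \supseteq N(a)$, so $|N(A_1)| \ge m-1$ whenever $A_1 \neq \emptyset$.

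The key chain of inequalities is the following. Suppose $|N(A_1)| < |A_1|$ and set $B_1 = N(A_1)$, $B_2 = B \setminus B_1$, $A_2 = A \setminus A_1$. By definition of $B_1$ there are no edges between $A_1$ and $B_2$, so every vertex of $B_2$ has all its (at least $m-1$) neighbours inside $A_2$, forcing $|A_2| \ge m-1$ as soon as $B_2 \neq \emptyset$, hence $|A_1| \le 2m - (m-1) = m+1$. Combined with $|N(A_1)| \ge m-1$ and $|N(A_1)| < |A_1| \le m+1$, the only possibilities are exactly $(|A_1|,|N(A_1)|) \in \{(m,m-1),(m+1,m-1),(m+1,m)\}$, which is the trichotomy in the statement. (If instead $B_2 = \emptyset$ then $B_1 = B$ and $|N(A_1)| = 2m \ge |A_1|$, so Hall's condition does not fail — this edge case is quickly dismissed.)

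Next I would pin down the induced subgraphs in each case, and here the main leverage is again the minimum-degree hypothesis applied from the $B$ side. In every case $G[A_1,B_2]$ is empty by construction. For the structural claims:

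\begin{enumerate}
\item If $|A_1| = m$, $|B_1| = m-1$: every vertex of $A_1$ has degree $\ge m-1$ and all its neighbours lie in $B_1$ (which has only $m-1$ vertices), so every vertex of $A_1$ is adjacent to all of $B_1$; hence $G[A_1,B_1] \cong K_{m,m-1}$.
\item If $|A_1| = m+1$, $|B_1| = m-1$: as above every vertex of $A_1$ is complete to $B_1$, giving $G[A_1,B_1] \cong K_{m+1,m-1}$. Dually, here $|A_2| = m-1$ and $|B_2| = m+1$; every vertex of $B_2$ has degree $\ge m-1$ with all neighbours in $A_2$ (since $A_1$ sends no edges to $B_2$) and $|A_2| = m-1$, so every vertex of $B_2$ is complete to $A_2$, i.e.\ $G[A_2,B_2] \cong K_{m-1,m+1}$.
\item If $|A_1| = m+1$, $|B_1| = m$: now $|A_2| = m-1$ and $|B_2| = m$; every vertex of $B_2$ has degree $\ge m-1$, all neighbours in $A_2$, and $|A_2| = m-1$, so each vertex of $B_2$ is complete to $A_2$, giving $G[A_2,B_2] \cong K_{m-1,m}$.
\end{enumerate}

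I expect the main obstacle to be bookkeeping rather than depth: one must be careful that the three ``exceptional'' configurations are genuinely the only ones — in particular ruling out $|A_1| < m$ (handled by $|N(A_1)| \ge m-1 \ge |A_1|$ when $|A_1| \le m-1$) and making sure the argument is applied symmetrically from both sides without circularity. A secondary subtlety is that the lemma as stated describes the shape of \emph{any} Hall-violating set, so I should check there is no hidden interaction forcing, say, $B_1 \neq N(A_1)$; but since we \emph{define} $B_1 = N(A_1)$, the emptiness of $G[A_1,B_2]$ is automatic and the only real content is the counting plus the two ``complete bipartite'' deductions from the minimum degree. Once these are assembled, the statement follows directly.
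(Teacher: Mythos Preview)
Your argument is correct. Note, however, that the paper does not actually supply its own proof of this lemma: it is quoted verbatim as \cite[Lem.~18]{CCDK22+} and used as a black box in the proof of Lemma~\ref{lem:forbid_d_2d-2_pm}. So there is no in-paper proof to compare against; what you have written is the natural direct argument (minimum degree forces $m-1 \le |N(A_1)| < |A_1| \le m+1$, and then the same degree bound applied from the appropriate side pins down the complete bipartite pieces), and it is exactly what one would expect the cited proof to look like.
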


\begin{lemma}\label{lem:forbid_d_2d-2_pm}
 Fix $d \ge 3.$ 
 Then the number of forbidden $d \times (2d-2)$-packing matrices equals to
 $$N(d)\binom{2d-2}{d}\binom{2d-2}{d-1}\left( 2((d-1)!)^d-(d-1)^2\left( d-1+\frac1d\right)((d-2)!)^d\right).$$
\end{lemma}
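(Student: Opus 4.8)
The plan is to count forbidden $d \times (2d-2)$-packing matrices by applying Lemma~\ref{lem:checkingHallCondition_graphversion_2} with $m=d-1$ to the bipartite graph $G$ whose two sides are the index set $[2d-2]$ of columns (i.e. colourings) and the value set $[2d-2]$, with $c$ joined to colouring $j$ precisely when $c$ can legally be placed in position $j$ of the sought-after derangement, namely when $c$ does not already appear in column $j$ of $M$. Each column of $M$ is a permutation of $[2d-2]$, so exactly $d-1$ values are forbidden at each position, giving minimum degree $(2d-2)-(d-1)=d-1=m$. Wait — Lemma~\ref{lem:checkingHallCondition_graphversion_2} is stated for minimum degree $m-1$ on a $2m$ by $2m$ graph, so I would first check that the relevant $G$ here has the right parameters; if $G$ has minimum degree $d-1$ among $2(d-1)$ vertices on each side then $m = d-1$ and $m-1 = d-2$ does not match, so I must instead take $G$ to be the \emph{non-availability} graph (each position forbids $d-1$ of the $2d-2$ values) whose complement has the structure the lemma controls, or reconcile the indexing so that the lemma applies verbatim. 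The correct reading: a derangement avoiding every column of $M$ is an independent transversal / a perfect matching in the availability graph, which has minimum degree $(2d-2)-(d-1) = d-1$; matching the hypotheses of Lemma~\ref{lem:checkingHallCondition_graphversion_2} means $2m = 2d-2$ and $m-1$ is the min degree, i.e. $m = d-1$ and min degree $d-2$ — off by one. So I would actually work with the graph where a value $c$ is joined to position $j$ iff $c$ \emph{does} occur in some earlier-agreed structure; in any case the structural conclusions of Lemma~\ref{lem:checkingHallCondition_graphversion_2} tell us that Hall's condition fails exactly when a set of $m$ or $m+1$ positions collectively forbids down to $m-1$ values, and these failures come in the three rigid configurations listed (a $K_{m,m-1}$, a pair $K_{m+1,m-1}\sqcup K_{m-1,m+1}$, or a $K_{m-1,m}$).

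Translating those three configurations back to $M$: a Hall-failure means there is a set $J$ of positions and a set $C$ of values with $|J| > |C|$ such that every value outside $C$ appears in every column of $M$ indexed by $J$ — equivalently, restricted to columns $J$, the rows avoiding $C$ are "full." Concretely, case~(1) should correspond to a choice of $d$ columns (positions $J$, $|J|=d$) and $d$ values $C$ with $|C|=d-1$... here I would carefully recount: the forbidden matrix contains, in some $d$ columns and on the rows indexed by a $d$-subset, a sub-array that is a Latin square of order $d$, exactly as in the proof of Theorem~\ref{thr:threshold_2d}\eqref{itm:1} but now with $k = 2d-2$ instead of $2d-1$. Following that proof's template, the count of matrices containing a fixed $d\times d$ Latin square in a fixed position is $N(d)$ for the square times $((d-2)!)^{\,?}$ for the rest; with $k=2d-2$ the complementary part of each of the $d$ involved columns has $2d-2-d = d-2$ free entries, giving $((d-2)!)^d$, while the remaining $2d-2-d = d-2$ columns are arbitrary permutations of $[2d-2]$. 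This suggests the raw "one Latin square" count is $N(d)\binom{2d-2}{d}\binom{2d-2}{d-1}((d-2)!)^d \cdot ((2d-2)!)^{d-2}$ — but the stated formula has the much smaller shape $N(d)\binom{2d-2}{d}\binom{2d-2}{d-1}\big(2((d-1)!)^d - (d-1)^2(d-1+\tfrac1d)((d-2)!)^d\big)$, so the binomial factors $\binom{2d-2}{d}\binom{2d-2}{d-1}$ must be pinning down both the value set $C$ ($d-1$ values, $\binom{2d-2}{d-1}$ ways) and the position set ($d$ positions, $\binom{2d-2}{d}$ ways), and the bracket is the count of the actual matrix-content compatible with that rigid $(m{+}1)\times(m{-}1)$ bipartite structure, summed over the three cases with inclusion–exclusion.

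Thus the core of the argument is: fix the rigid "bad pair" $(C,J)$ with $|C| = d-1$, $|J| = d$; by Lemma~\ref{lem:checkingHallCondition_graphversion_2} the only way a $d\times(2d-2)$-packing matrix is forbidden is that for some such $(C,J)$ its availability graph realizes configuration (1), (2) or (3); for a fixed $(C,J)$ I would count the matrices realizing \emph{at least one} of these and show this equals $2((d-1)!)^d - (d-1)^2(d-1+\tfrac1d)((d-2)!)^d$; and finally argue that distinct rigid pairs $(C,J)$ give disjoint sets of forbidden matrices (this disjointness, analogous to the remark "no partial correspondence-packing can lead to a contradiction for two different choices of $(C,J)$" in the proof of Theorem~\ref{thr:threshold_2d}\eqref{itm:1}, is what makes the overall count a clean product rather than an inclusion–exclusion over pairs). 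For the per-$(C,J)$ count: in configuration where the $d$ columns of $J$, restricted to the $d$ rows hitting exactly $C\cup\{*\}$... I would set up that on those $d$ columns the entries lying in the complement of $C$ must form a structure forcing a Latin-square-like pattern — one gets two "orientations" (which side of the split plays the role of $K_{m,m-1}$ versus $K_{m-1,m}$), each contributing $((d-1)!)^d$, hence the leading $2((d-1)!)^d$, and then the over-counted matrices that realize the configuration in both orientations simultaneously (configuration (2), the $K_{m+1,m-1}\sqcup K_{m-1,m+1}$ case) must be subtracted, and these are counted by a finer argument giving $(d-1)^2(d-1+\tfrac1d)((d-2)!)^d$. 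The main obstacle, and the step deserving the most care, is precisely this exact enumeration of matrices realizing the rigid bipartite configurations of Lemma~\ref{lem:checkingHallCondition_graphversion_2} for a fixed $(C,J)$ — getting the factor $(d-1)^2(d-1+\tfrac1d)$ right requires correctly identifying how the two rigid $K_{m-1,m+1}$-blocks interact, what freedom remains in the $d-2$ non-$J$ columns and in the unconstrained rows, and then doing the inclusion–exclusion between the "orientation (1)" count, the "orientation (3)" count, and their overlap "orientation (2)" without double-counting; the disjointness-over-$(C,J)$ claim is a secondary but essential ingredient, provable by noting that a forbidden matrix determines its bad pair uniquely (any two Hall-violating pairs would, by the rigidity, have to coincide).
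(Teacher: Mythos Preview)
Your setup gets tangled on a point that resolves cleanly: the packing matrix has $d$ columns (one per $u_i$), each a permutation of $[2d-2]$, and $2d-2$ rows; at any row the forbidden values are the $d$ column-entries there, so the availability graph has minimum degree at least $(2d-2)-d=d-2=m-1$ with $m=d-1$, and Lemma~\ref{lem:checkingHallCondition_graphversion_2} applies verbatim. There is no off-by-one and no need to pass to a complement, and there are no ``remaining $d-2$ columns'' to fill freely. More seriously, the three cases of that lemma do not all live over a single pair $(C,J)$ with $|C|=d-1$, $|J|=d$: they correspond to \emph{different} parameters, which the paper records as $(d-1,d-2)$, $(d,d-2)$, and $(d,d-1)$ obstructions, meaning row/value sets of sizes $(|J|,|C|)=(d-1,d)$, $(d,d)$, and $(d,d-1)$ respectively, each supporting its own rigid Latin structure. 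Your attempt to read the bracket as ``two orientations of one configuration minus their overlap'' for a fixed $(C,J)$ therefore does not match the combinatorics.

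The disjointness claim is the essential gap. In the proof of Theorem~\ref{thr:threshold_2d}\eqref{itm:1}, with lists of size $2d-1$, a forbidden matrix determines its bad pair uniquely; here, with size $2d-2$, that fails. A matrix carrying a $(d,d-2)$ obstruction---some $d$ rows $J'$ forming a $d\times d$ Latin square on some $d$-set $C'$---is witnessed as a $(d-1,d-2)$ obstruction by each of the $d$ choices of $d-1$ rows inside $J'$, so it is counted $d$ times in the naive sum. The paper therefore computes three separate totals $w_1,w_2,w_3$ over all choices of the relevant $(C,J)$, with $w_3$ already restricted to $(d,d-1)$ matrices lacking any $(d-1,d-2)$ obstruction (this restriction, on how many of the $d$ ``extra'' entries may coincide, is where the factor $(d-1)^3$ enters), and returns $w_1-(d-1)w_2+w_3$; the identity $\binom{2d-2}{d}=\tfrac{d-1}{d}\binom{2d-2}{d-1}$ then puts $w_2$ on the common binomial footing. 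The $2((d-1)!)^d$ you noticed does come from one $((d-1)!)^d$ in $w_1$ and one in $w_3$, but these count genuinely different Latin-rectangle structures (a $(d-1)\times d$ rectangle on $d$ symbols versus a $d\times d$ square on $d$ symbols with one wildcard), not two symmetric orientations of the same one.
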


\begin{proof}
By Lemma~\ref{lem:checkingHallCondition_graphversion_2}, there are three types of forbidden $d \times (2d-2)$-packing matrices. 
The three types correspond with $(d-1,d-2), (d,d-2)$ and $(d,d-1)$ obstructions, where the $(a,b)$ obstruction corresponds with $a$ entries in the final vector (which needs to be a permutation of $[2d-2]$) that only can take on (exactly) $b$ values from $[2d-2]$ in a common derangement of every column of the matrix.
 Moreover, each forbidden $d \times (2d-2)$-packing matrix has only one maximal obstruction.
 Note e.g. that if such a matrix has a $(d,d-2)$ obstruction, then it also has a (multiple) $(d-1,d-2)$ obstruction(s).
 Examples of $4 \times 6$-packing matrices with respectively a maximal $(3,2),(4,2)$ and $(4,3)$ obstruction are presented in Figure~\ref{fig:3casesforbiddenpackingmatrices} (the obstructing rows are the top ones).
 
 We now count the number of forbidden $d \times (2d-2)$-packing matrices $M$.
 A $(d-1,d-2)$ obstruction occurs precisely when there are $d-1$ rows containing the same $d$ numbers.
 Let $C, J \subset [2d-2]$ with $\lvert C \rvert =d$ and $\lvert J \rvert =d-1$ be the sets that represent the $d$ numbers and the $d-1$ row indices respectively.
 The restriction of the matrix $M_{J \times [d]}$ is a $(d-1) \times d$ Latin rectangle all whose entries belong to $C$.
 There are $\binom{2d-2}{d}\binom{2d-2}{d-1}$ choices to choose the sets $C$ and $J$. Having chosen these, there are $N(d)$ choices for $M_{J \times [d]}$ and then $((d-1)!)^d$ possibilities for $M$ given $M_{J \times [d]}$.
 By the product formula, we counted 
 $$w_1=N(d)\binom{2d-2}{d}\binom{2d-2}{d-1}((d-1)!)^d$$ of them. Nevertheless, every matrix with a $(d,d-2)$ obstruction has been counted $d$ times.
 
 Next, analogously, we count the number of forbidden $d \times (2d-2)$-packing matrices $M$ with a $(d,d-2)$ obstruction. In that case, there are sets $C, J \subset [2d-2]$ with $\lvert C \rvert =d=\lvert J \rvert$ such that $M_{J \times [d]}$ is a $d \times d$ Latin square all whose entries belong to $C$.
 There are $\binom{2d-2}{d}^2$ choices for $C,J$.
 For every choice there are $N(d)$ possibilities for $M_{J \times [d]}$ and $((d-2)!)^d$ possibilities to complete $M.$
 By the product formula, we have counted 
 $$w_2=N(d)\binom{2d-2}{d}^2((d-2)!)^d$$ of them.

 Finally, we count the number of forbidden $d \times (2d-2)$-packing matrices $M$ with a $(d,d-1)$ obstruction.
 Now there are sets $C, J \subset [2d-2]$ with $\lvert C \rvert =d-1$ and $d=\lvert J \rvert$ such that every row in $M_{J \times [d]}$ contains every number of $C.$
 As such, every row contains one additional number which does not belong to $C$.
 There are $N(d)$ choices for $M_{J \times [d]}$ given $C$ and $J$ when the additional numbers are discarded (or considered as a new element).
 If $d-1$ or $d$ additional numbers in these $d$ rows are the same, the matrix $M$ would have a $(d-1,d-2)$ obstruction.
 There are $d-1$ choices for the additional number, if it is the same in all rows.
 There are $(d-1)(d-2)$ choices for the two additional numbers, with $d$ possible entries where the number that appears only once can be.
 This gives the factor $d-1+(d-1)(d-2)d=(d-1)^3.$
 Having said all of this, we can compute the number $w_3$ of forbidden $d \times (2d-2)$-packing matrices $M$ with a $(d,d-1)$ obstruction without a $(d-1,d-2)$ obstruction;
 \begin{align*}
 w_3&=N(d)\binom{2d-2}{d}\binom{2d-2}{d-1}\left(
 ((d-1)!)^d-(d-1)^3((d-2)!)^d \right).
 \end{align*}
 Using the binomial identity $\binom{2d-2}{d}=\frac{d-1}{d}\binom{2d-2}{d-1}$,
 the total number of forbidden $d \times (2d-2)$-packing matrices equals the following:
 $$w_1-(d-1)w_2+w_3=
 N(d)\binom{2d-2}{d}\binom{2d-2}{d-1}\left( 2((d-1)!)^d-(d-1)^2\left( d-1+\frac1d\right)((d-2)!)^d\right).$$
 Note that in the precise counting, $w_1$ does count some of the $d \times (2d-2)$-packing matrices with a maximal $(d,d-1)$ obstruction.
\end{proof}

\begin{figure}[h]
 \centering
 $\begin{pmatrix} 
1 & 2 & 3 & 4\\
2 & 1&4&3 \\
3 & 4 & 2 & 1\\
4&3&5&6 \\
5&6&1 & 5 \\
6&5&6&2
\end{pmatrix}$
\quad
$\begin{pmatrix} 
1 & 2 & 3 & 4\\
2 & 1&4&3 \\
3 & 4 & 2 & 1\\
4&3&1&2 \\
5&6&5 & 5 \\
6&5&6&6
\end{pmatrix}$
\quad
$\begin{pmatrix} 
1 & 2 & 3 & 4\\
2 & 1&4& 3 \\
3 & 6 & 2 & 1\\
6&3&1&2 \\
5&4&5 & 5 \\
4&5&6&6
\end{pmatrix}$
\caption{Examples of the three types of forbidden $4\times 6$-packing-matrices.}
 \label{fig:3casesforbiddenpackingmatrices}
\end{figure}

Having proven Lemma~\ref{lem:forbid_d_2d-2_pm}, the proof of Theorem~\ref{thr:threshold_2d-1} is similar to the proof of Theorem~\ref{thr:threshold_2d}.

\begin{proof}[Proof of Theorem~\ref{thr:threshold_2d-1}]
Let

\begin{equation*}
w:=w(d)=N(d)\binom{2d-2}{d}\binom{2d-2}{d-1}\left( 2((d-1)!)^d-(d-1)^2\left( d-1+\frac1d\right)((d-2)!)^d\right)
\end{equation*}

and

\begin{equation*}
y:=y(d)=\frac{((2d-2)!)^d}{w(d)}.
\end{equation*}

We iteratively construct a correspondence-cover of $K_{d,t}=(U \cup V,E)$ for some $t \le y \cdot d \log\left( (2d-2)! \right )$ for which no correspondence-packing exists. Let $U=\{u_1,u_2, \ldots ,u_d\}$ and $V=\{v_1,v_2, \ldots ,v_t\}.$
Let $Y_s$ be the number of partial correspondence-packings of $U$ that are extendable in $\{v_1,\ldots, v_{s}\}$, which implies $Y_0=((2d-2)!)^d$.

If for every choice of $d$ perfect matchings between a list $L(v)$ and the $(L(u_i))_{1\le i \le d}$, we list the partial correspondence-packings of $U$ that cannot be extended according to $L(v)$, then each possibility will be listed the same number of times.
On average, $1/y$ of the partial correspondence-packings are forbidden, which implies that we can forbid all of them with no more than $\log_{1-1/y}\left(y/Y_0\right)+y \le y \log{Y_0}$ vertices in $V$ (again, $y(d) >3$ for $d\ge3$).
\end{proof}

\section{$\Im( \chi_c^\star)= \mathbb Z^+ \setminus\{3\}$}\label{sec:Im=Z3}

In Sections~\ref{sec:thresholdfunction_2s} and~\ref{sec:thresholdfunction_2s-1}, we derived bounds for $T(d)$ and an upper bound for $T'(d)$.
In Figure~\ref{fig:tableTvalues} we give a table with some computed bounds for $T'(d)$ and $T(d)$ whenever $3 \le d \le 11.$
In particular we note that for each $3 \le d \le 11$, there are complete bipartite graphs for which $\chi_c^\star(K_{d,t})= 2d-1$, since $T'(d)<T(d)$
For $d\in \{3,4\}$, we could slightly improve the bound by explicitly taking $X_s= \lfloor (1-1/x )X_{s-1} \rfloor$ instead of the estimate (which is given between brackets).

\begin{figure}[h]
\begin{center}
 \begin{tabular}{ | l | l | l | }
 \hline
 $d$& Upper bound $T'(d)$ & Lower bound $T(d)$\\
 \hline
 3& 54 (62) & 180\\ 
 4& 14853 (15172) & 705600\\
5&413809958& 308629440000\\
6&551649401930292& 
7808216194437120000\\
7&$5.97\cdot 10^{22}$ &$1.99\cdot 10^{28}$\\
8&$4.73\ \cdot 10^{32}$ &$4.55\cdot 10^{39}$\\
9& $3.02 \cdot 10^{44}$& $9.90 \cdot 10^{53}$\\
10&$1.63\cdot 10^{58}$ & $2.10\cdot 10^{68}$ \\
11&$7.72 \cdot 10^{73}$& $4.45\cdot10^{85}$\\
 \hline
 \end{tabular}
\end{center}
\caption{Estimates of threshold functions $T'(d)$ and $T(d)$ for which $\chi_c^\star(K_{d,t})\ge 2d-1$ resp. $2d$}\label{fig:tableTvalues}
\end{figure}

By finding explicit correspondence-covers (randomly generated), with a computer program, we can give more precise bounds for $T'(3)$ and $T(3).$

\begin{proposition}\label{prop:TT'3}
 We have $9 \le T'(3)\le 16$ and $180 \le T(3)\le 389.$
\end{proposition}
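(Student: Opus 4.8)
\textbf{Proof plan for Proposition~\ref{prop:TT'3}.}

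The plan is to establish the four inequalities separately; two of them are immediate consequences of material already developed, and two require an explicit (computer-assisted) construction of a correspondence-cover.

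\emph{The two easy bounds.} The lower bounds $9 \le T'(3)$ and $180 \le T(3)$ should follow from Theorem~\ref{thr:threshold_2d}~\eqref{itm:1}: plugging $d=3$ into $x(d)=\frac{((2d-1)!)^d}{\binom{2d-1}{d}^2((d-1)!)^d N(d)}$ with $N(3)=12$ gives a numerical value, and Theorem~\ref{thr:threshold_2d}~\eqref{itm:1} tells us that $\chi_c^\star(K_{3,t})<6$ for all $t$ below that value, so $T(3)$ is at least $\lceil x(3)\rceil$; a quick arithmetic check should yield $x(3)=180$, giving $T(3)\ge 180$. For the bound $T'(3)\ge 9$ I would instead invoke Section~\ref{sec:chi_ell^*(K3t)}, where it is shown that $\chi_\ell^\star(K_{3,t})=3$ for $t\le 8$; since $\chi_c^\star(G)\ge\chi_\ell^\star(G)$ does not directly help here, the cleaner route is the companion fact (to be proved in that section, or by a direct counting argument analogous to the $d=2$ discussion) that the \emph{standard} $5$-fold correspondence-cover of $K_{3,t}$ admits a packing whenever $t\le 8$, equivalently $\chi_c^\star(K_{3,t})\le 5$ for $t\le 8$, so $T'(3)\ge 9$.

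\emph{The two hard bounds.} The upper bounds $T'(3)\le 16$ and $T(3)\le 389$ require exhibiting, for $t=16$ (resp.\ $t=389$), a $5$-fold (resp.\ $5$-fold with target $2d=6$, i.e.\ a $5$-fold cover ruling out $\chi_c^\star\le 5$ and then a $389$-vertex cover ruling out $\le 5$... more precisely an $(2d-1)=5$-fold cover with no packing for $T(3)$, giving $\chi_c^\star(K_{3,389})=6$) correspondence-cover of $K_{3,t}$ that admits no correspondence-packing. The approach, mirroring the proofs of Theorems~\ref{thr:threshold_2d} and~\ref{thr:threshold_2d-1} but \emph{without} the averaging argument, is: enumerate the $((2d-1)!)^d$ packing-matrices $(\vec c(u_i))_{i\in[3]}$ on $U$ (here $(5!)^3 = 120^3=1{,}728{,}000$ or $(4!)^3=13{,}824$ in the $2d-1$ setting — small enough for a computer), and greedily (or randomly, then verify) select perfect matchings for $v_1,v_2,\ldots$ so that after $t$ vertices every packing-matrix on $U$ has been ``killed''. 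For $T'(3)$ one works with list-size $2d-2=4$ and needs $\chi_c^\star(K_{3,t})\ge 2d-1=5$, i.e.\ a $4$-fold cover with no packing; Lemma~\ref{lem:forbid_d_2d-2_pm} with $d=3$ gives $w(3)$ forbidden matrices per vertex, and one checks computationally that $16$ well-chosen vertices suffice to forbid all $(4!)^3$ of them. For $T(3)$ one works with list-size $2d-1=5$ and verifies that $389$ vertices suffice.

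\emph{Main obstacle.} The genuine difficulty is the $T(3)\le 389$ bound: the averaging bound from Theorem~\ref{thr:threshold_2d}~\eqref{itm:2} only gives $x(3)\cdot 3\log(120)$, which is far larger than $389$, so one cannot simply cite it — one must actually run a randomized search over covers of $K_{3,389}$ and certify that no packing exists, which means verifying that the union of the ``forbidden'' sets over the $389$ chosen vertices covers all $1{,}728{,}000$ packing-matrices on $U$. Making this certificate trustworthy (and reporting it honestly as computer-assisted, as the statement's phrasing ``randomly generated, with a computer program'' signals) is the crux; the combinatorial content is light once the search succeeds. A secondary subtlety is ensuring that the counting from the proof of Theorem~\ref{thr:threshold_2d}~\eqref{itm:1} is used correctly to pin down $x(3)=180$ exactly (so that the lower bound is $180$ and not $179$ or $181$), which just requires evaluating $\frac{(120)^3}{100\cdot 4\cdot 12}$ and observing it is an integer.
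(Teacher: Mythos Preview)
Your approach to three of the four inequalities matches the paper's: $T(3)\ge 180$ is exactly Theorem~\ref{thr:threshold_2d}~\eqref{itm:1} with $d=3$ (though note $(2!)^3=8$, not $4$, so the fraction is $\frac{120^3}{100\cdot 8\cdot 12}=180$), and the two upper bounds $T'(3)\le 16$, $T(3)\le 389$ are obtained in the paper precisely as you describe, by a computer search that produces an explicit cover with no packing.

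The gap is in your argument for $T'(3)\ge 9$. This bound means that \emph{every} $4$-fold correspondence-cover of $K_{3,8}$ admits a packing, i.e.\ $\chi_c^\star(K_{3,8})\le 4$. Neither of your proposed routes gives this. Section~\ref{sec:chi_ell^*(K3t)} proves $\chi_\ell^\star(K_{3,8})=3$, but since $\chi_c^\star\ge\chi_\ell^\star$ this yields nothing about an \emph{upper} bound on $\chi_c^\star$. Your fallback (``the standard $5$-fold cover admits a packing whenever $t\le 8$, equivalently $\chi_c^\star(K_{3,t})\le 5$'') is doubly off: the relevant list size is $4$, not $5$, and showing that one particular cover packs says nothing about all covers. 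If you try the counting argument analogous to Theorem~\ref{thr:threshold_2d}~\eqref{itm:1}, Lemma~\ref{lem:forbid_d_2d-2_pm} with $d=3$ gives $w(3)=1920$ forbidden matrices per vertex out of $(4!)^3=13824$, so the ratio is $13824/1920=7.2$ and the counting bound is only $T'(3)\ge 8$, one short of what is claimed. The paper's $9$ is therefore \emph{also} a computer result: one verifies exhaustively (up to symmetry) that every $4$-fold correspondence-cover of $K_{3,8}$ has a packing. You should plan for that verification rather than hoping a non-computational argument covers it.
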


For $7 \le d \le 11$, the lower bound $x(d)$ for $T(d)$ was not integral since $N(d)$ contains a prime which is strictly larger than $2d-1.$
Therefore, at least for these cases, and possibly for $d\geq3$, there are no nice partitions as there were in the case $d=2$. 

For $d\ge 12$, the exact value of $N(d)$ is not known, but we can conclude that $T'(d)<T(d)$ for every $d \ge 3.$
The latter being done with elementary estimates and $2d^2 \log(2d)<(2d-1)^d$ for $d \ge 12.$

We conclude that for every $d \ge 3$, both $2d-1$ and $2d$ can be equal to $\chi_c^\star(G)$ for some (complete, bipartite) graph $G$, so do $1,2$ and $4$ (e.g. attained by $K_1, K_2$ and $K_3$).
Nevertheless, there does not exist a graph $G$ for which $\chi_c^\star(G)=3$ and so we conclude this is the only value that cannot be the correspondence packing number of a graph. Theorem~\ref{thr:main} is proven.

\section{The list packing number of $K_{3,t}$}\label{sec:chi_ell^*(K3t)}

A classical result says that $\chi_{\ell}(K_{d,t})=d+1$ if and only $t \ge d^d$.
This is easy to prove by considering a $d$-list-assignment of $K_{d,t}$ and considering the $d$ lists of the bipartition class $U$ of size $d$.
If they are not disjoint, we can colour the vertices in $U$ with at most $d-1$ colours and extend greedily (independent of the value of $t$) for the vertices of the other partition class, $V$.
In the other case, there are $d^d$ possible colourings on $U$ and every list of a vertex in $V$ forbids at most one of them.

Generalising this result to list packing is harder.
Complete bipartite graphs give an indication that list packing may conceptually be harder than list colouring.
For a fixed list-assignment $L$ of $K_{a,b}=(U \cup V,E)$, a list-colouring corresponds with a partition of the colours in two sets, each of them intersecting every list for every vertex of one vertex class. To find a list-packing, one needs to have permutations of every list such that $\vec{c}(u)$ and $\vec{c}(v)$ are derangements of each other for every $u \in U, v \in V.$

If the lists on the smaller partition class are disjoint, then it is still easy to find a list-packing. However, in the case where the lists are not disjoint, a list-packing may not exist. Additionally, simply considering the number of partial packings of $U$ that are forbidden by a single list at $V$ does not necessarily imply the result. The latter can be concluded from observing the matrix $A_1$ from Proposition~\ref{prop:chilK3t}.

In this section, we determine $\chi_{\ell}^\star(K_{a,t})$ for $a \in \{1,2,3\}$ and $ t \in \mathbb Z^+$.
For $a \le 2,$ this was already known: $\chi_{\ell}^\star(K_{1,t})=2$ for every $t$ and $\chi_{\ell}^\star(K_{2,t})=3$ for every $t \ge 2.$

For $a=3,$ we conclude by doing a careful case analysis.

\begin{proposition}\label{prop:chilK3t}
 The following list packing numbers are known
\begin{equation*}
 \chi_{\ell}^{\star}(K_{3,t})=\begin{cases}
 3 \mbox{ if } 2 \le t \le 8.\\
 4 \mbox{ if } t \ge 9.
\end{cases} 
\end{equation*}
\end{proposition}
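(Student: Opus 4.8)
The plan is to prove the two cases separately, each reducing to a statement about $3\times t$ arrangements of permutations of the lists. Throughout, fix a $3$-list-assignment $L$ of $K_{3,t}=(U\cup V,E)$ with $U=\{u_1,u_2,u_3\}$; as explained in the preliminary discussion, a list-packing is equivalent to a choice of permutations $\vec c(u_i)$ of $L(u_i)$ together with, for each $v\in V$, a permutation $\vec c(v)$ of $L(v)$ that is a derangement of each $\vec c(u_i)$ (after translating through the bipartite adjacency pattern, which for list-covers is essentially the colour-matching pattern between the lists). If the three lists on $U$ are pairwise disjoint, one colours $U$ with at most $3$ colours and extends greedily to $V$ regardless of $t$, so throughout we may assume the lists on $U$ overlap.

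\emph{Upper bound ($t\ge 9$ forces $\chi_\ell^\star(K_{3,t})\ge 4$).} Here I would exhibit a bad $3$-list-assignment. The natural candidate is $L(u_1)=L(u_2)=L(u_3)=\{1,2,3\}$ on the small side, so that $\vec c(u_1),\vec c(u_2),\vec c(u_3)$ range over all $(3!)^3=216$ triples of permutations of $[3]$; on the large side one must choose $9$ lists (drawn from $\{1,2,3,4\}$, with appropriate colour-identifications so each vertex of $V$ is adjacent to the right colours in $U$) so that every such triple is killed by at least one vertex of $V$. Since a single vertex of $V$ forbids exactly those column-triples containing a $3\times 3$ Latin square in the visible rows/colours — and by the $d=2$ analysis in Subsection~\ref{subsec:prelim_d2} a single standard list forbids $18$ of the $36$ pairs, so roughly a constant fraction of the $216$ triples — one needs a covering design: $9$ carefully chosen "obstruction patterns" whose union is all $216$ triples. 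The main work is to write down these $9$ lists explicitly (the matrix $A_1$ referenced in the statement is presumably the witness that naive counting of $t\cdot(\text{forbidden per vertex})$ does not suffice, so the $9$ vertices must be chosen cleverly rather than generically) and to verify, case by case over the structure of $(\vec c(u_1),\vec c(u_2),\vec c(u_3))$, that no triple survives. I expect this explicit covering construction and its verification to be the main obstacle.

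\emph{Lower bound ($2\le t\le 8$ gives $\chi_\ell^\star(K_{3,t})=3$).} Here I would show that for \emph{every} $3$-list-assignment with $t\le 8$ a packing of size $3$ exists. Reduce to the overlapping case; since $|U|=3$ the possible overlap patterns of the three lists are limited (two equal, all three equal, pairwise intersecting in various sizes, a common element, etc.), and I would organise the proof by these patterns. For each pattern, a single vertex of $V$ forbids only a bounded number of the admissible triples $(\vec c(u_1),\vec c(u_2),\vec c(u_3))$ on $U$ — one should compute this bound per pattern, as in the $d=2$ count — and the number of admissible triples on $U$ is a fixed number depending only on the pattern. The inequality to check is that $8$ times the per-vertex forbidden count is strictly less than the total number of admissible triples, \emph{except} when some configuration of colour-matchings on the $v_j$ lets the forbidden sets be disjoint and exactly tile the admissible triples — and one must show that tiling requires $t\ge 9$. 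This is where the careful case analysis lives: in the worst overlap pattern ($L(u_1)=L(u_2)=L(u_3)$) there are $216$ admissible triples and each $v_j$ forbids at most $\lfloor 216/9\rfloor\cdot(\text{something})$ — I would pin down that each $v_j$ forbids at most $24$ triples (pairs of the $d=2$ type lifted to triples), so $8$ vertices forbid at most $192<216$, leaving a surviving triple. For the lighter overlap patterns the slack is larger and the argument is easier. The main obstacle on this side is handling the extremal pattern tightly enough that $8$ vertices genuinely cannot cover all triples while $9$ can, matching the threshold exactly.

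Combining the two bounds gives $\chi_\ell^\star(K_{3,t})=3$ for $2\le t\le 8$ and $\chi_\ell^\star(K_{3,t})=4$ for $t\ge 9$, as claimed; monotonicity of $\chi_\ell^\star(K_{3,t})$ in $t$ (adding vertices to $V$ only makes packing harder) and the universal bound $\chi_\ell^\star(K_{3,t})\le 4$ from degeneracy and $\chi_c^\star(G)\le 2\delta^\star(G)$ handle the remaining values of $t$.
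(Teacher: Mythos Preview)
Your proposal contains a fundamental error that breaks both directions of the argument. You claim that if the three lists on $U$ are pairwise disjoint then one can ``colour $U$ with at most $3$ colours and extend greedily to $V$ regardless of $t$.'' This is true for list \emph{colouring} but false for list \emph{packing}. In a packing you must find three disjoint proper colourings, so every colour in $L(u_1)\cup L(u_2)\cup L(u_3)$ is used somewhere on $U$. If $L(v)=\{a_1,a_2,a_3\}$ with $a_i\in L(u_i)$, then the column $j$ of the partial packing in which all of $a_1,a_2,a_3$ happen to sit forbids every colour of $L(v)$ for the $j$th colouring, and no extension exists at $v$. Indeed, the paper's witness for $t\ge 9$ uses exactly this: the lists on $U$ are the \emph{disjoint} sets $\{1,2,3\},\{4,5,6\},\{7,8,9\}$, and the nine lists on $V$ are $\{a,b,7\}$ for $a\in\{1,2,3\}$, $b\in\{4,5,6\}$. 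Whatever partial packing you choose (wlog $\vec c(u_3)=(7,8,9)$), the vertex with list $\{c_1(u_1),c_1(u_2),7\}$ kills colouring $c_1$. So your proposed construction with $L(u_1)=L(u_2)=L(u_3)=\{1,2,3\}$ is aimed at the wrong target, and your reduction for the $t\le 8$ direction discards precisely the extremal case.

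Two smaller points. First, your bound $\chi_\ell^\star(K_{3,t})\le 4$ via $\chi_c^\star\le 2\delta^\star$ only gives $6$, since $K_{3,t}$ has degeneracy $3$; the paper invokes a separate lemma for the bound $4$. Second, the paper's actual approach for $t\le 8$ is a case analysis over the twelve isomorphism types of list configurations on $U$ (including the fully disjoint case $A_8$ and the partially disjoint case $A_7$), checking in each case that eight vertices on $V$ cannot forbid every one of the $36$ partial packings with $\vec c(u_1)$ fixed. Your counting heuristic (``each $v_j$ forbids at most $24$ triples, so $8\cdot 24<216$'') is in the right spirit for some of the easier cases, but the paper warns explicitly that for the matrix $A_1$ this kind of uniform count is not enough, and several cases require structural arguments (e.g.\ showing that if no packing exists then $V$ must contain a specific collection of nine or more lists).
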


\begin{proof}
Note that $3= \chi_{\ell}^\star(K_{2,2})\le \chi_{\ell}^\star(K_{3,t})\le 4$ by~\cite[Lem.~33]{CCDK21} for every $t\ge 3.$ Let $K_{3,t}=(U \cup V,E)$ where $\abs U=3.$
Due to monotonicity in $t$, it will be sufficient to prove that $\chi_{\ell}^{\star}(K_{3,8})=3$ and $\chi_{\ell}^{\star}(K_{3,9})=4.$
The second equality is easy.
Let $L(u_i)=\{ 3\cdot(i-1)+1, 3\cdot(i-1)+2, 3\cdot(i-1)+3\}$ for every $i \in [3]$ and $(L(v_j))_{1 \le j \le 9}=7 \times \{ 1,2,3\} \times \{4,5,6\}=\{ \{a,b,7\} \mid a \in [3], 4\le b \le 6\}.$
Consider a partial packing $\vec c$ of $U$, where without loss of generality, $\vec{c}(u_3)=(7,8,9)$. 
Now there is an index $j \in [9]$ for which $L(v_j)=\{ c_1(u_1), c_1(u_2), 7\}$ and we conclude that the partial packing cannot be extended in $v_j$ since colouring $c_1$ cannot be proper.

Next, we prove that $\chi_{\ell}^{\star}(K_{3,8})=3$.
Up to isomorphism, there are $12$ possible assignments of lists of length $3$ to the three vertices of $U.$
In our analysis, we write their lists as the rows of a $3 \times 3$-matrix. That is, we write $\vec{c}(u_i)$ as a row vector and consider the plausible $3 \times 3$-matrices $M=(\vec{c}(u_i))_{1 \le i \le 3}$. Given $(L(u_i))_{1 \le i \le 3}$, there are $(3!)^3=216$ possibilities for $M$. Since we can fix $\vec{c}(u_1)$, it is sufficient to restrict to the $36$ choices of $M$ for which the first row equals the fixed choice.
We call such a permuted matrix (elements in second and third row permuted) \emph{forbidden} by a list $L(v_j)$ if no permutation of $L(v_j)$ is a derangement of each row of the permuted matrix.
Now, we check the $12$ possible assignments on $U$ and present them by giving at least one plausible matrix $M$.

We first can consider the following $5$ matrices 

\begin{equation*}
A_1:= \begin{pmatrix}
 1 & 2&3 \\
 1 & 2&4 \\
 1 & 3&4
 \end{pmatrix}, A_2:= \begin{pmatrix}
 1 & 2&3 \\
 1 & 2&4 \\
 1 & 5&3
 \end{pmatrix},
 A_3:= \begin{pmatrix}
 3 & 2&1 \\
 2 & 4&1 \\
 3 & 4&5
 \end{pmatrix}, A_4:=\begin{pmatrix}
 1 & 2&3 \\
 1 & 2&4 \\
 6 & 5&3
 \end{pmatrix}, A_5:=\begin{pmatrix}
 1 & 2&4 \\
 1 & 5&3 \\
 6 & 2&3
 \end{pmatrix}.
 \end{equation*}

For each of these, we observe that no element appears in every column, no row contains three different elements, and no two numbers appear in the same two columns. 
Hence for every $v_i \in V$, we can apply Hall's condition to permute $L(v_i)$ in a common derangement of the three rows of the matrix.

Next, we consider the case 
 \begin{equation*} A_6:=
 \begin{pmatrix}
 1 & 2&3 \\
 1 & 2&4 \\
 1 & 2&5
 \end{pmatrix} \sim \begin{pmatrix}
 1 & 2&3\\
 1 & 4&2 \\
 5 & 2&1 
 \end{pmatrix} \sim \begin{pmatrix}
 1 & 2&3 \\
 1 & 2&4 \\
 1 & 5&2
 \end{pmatrix}.
 \end{equation*}

Since $8<\binom{5}{3},$ and by symmetry between $1,2$ and $3$, as well as between $4$ and $5$, we may assume that 
some list in $\{3,4,5\}, \{1,2,3\}$ or $\{2,3,4\}$ does not appear as a list in $V$. For each of these $3$ lists, a possible choice for $M$ has been presented for which Hall's condition is satisfied for every other list. 
So again we conclude that a list-packing will exist.

 For the following two matrices
 \begin{equation*}
 A_7:= \begin{pmatrix}
 1 & 2&3 \\
 4&5&6 \\
 1 &7&8
 \end{pmatrix}, A_8:=\begin{pmatrix}
 1 & 2&3 \\
 4&5&6 \\
 9 &7&8
 \end{pmatrix}
 \end{equation*}
 Hall's condition can only be not satisfied if the three elements of a list $L(v_j)$ appear in the same column. 
 For each fixed list $L(v_j)$, there is at most one column in the matrix that can contain exactly the colours of $L(v_j)$, due to the fixed row $\vec{c}(u_1)$. Therefore, once these $5$ elements (in that row and column) are fixed, there are only $2$ choices for each of the remaining rows, namely $\vec{c}(u_2)$ and $\vec{c}(u_3)$. Thus, there are at most $4$ matrices that are forbidden by $L(v_j)$.
Given the $8$ lists $(L(v_j))_{j\in[8]}$, the total number of forbidden matrices is at most $8 \cdot 4=32 < 36$. Therefore, there exists choices for $M$ such that the partial packing can be extended.

 We present the matrix $A_9$ in four forms.
 \begin{equation*}
 A_9:=\begin{pmatrix}
 1 & 2&3 \\
 1&2&4\\
 6&1&5
 \end{pmatrix} \sim \begin{pmatrix}
 1 & 2&3 \\
 4&2&1\\
 1&6&5
 \end{pmatrix} \sim \begin{pmatrix}
 1 & 2&3 \\
 4&2&1\\
 5&6&1
 \end{pmatrix} \sim \begin{pmatrix}
 1 & 2&3 \\
 4&1&2\\
 1&6&5
 \end{pmatrix}
 \end{equation*}
 
 If there does not exist a list-packing, then $B$ contains the lists $\{3,4,1\},\{3,4,5\}$ and $\{3,4,6\}$ (first presentation + permuting last row), $\{3,1,5\}, \{3,1,6\}$ (second form), $\{1,4,5\}, \{1,4,6\}$ (third form) and one of out of each of  $\{\{1,2,6\}, \{2,3,5\}\}$ and $\{\{1,2,5\}, \{2,3,6\}\}.$ 
 This is impossible when $B$ only has $8$ vertices and as such a list-packing does exist.
 For matrix
\begin{equation*}
A_{10}:=\begin{pmatrix}
 1 & 2&3 \\
 1&4&5\\
 6&1&7
 \end{pmatrix}
 \sim\begin{pmatrix}
 1 & 2&3 \\
 5&4&1\\
 6&1&7
 \end{pmatrix}
 \end{equation*}
we observe that if 
$(L(v_j))_{1 \le j \le 8}=\{2,3\} \times \{4,5\} \times \{6,7\},$ the second presentation of $A_{10}$, with a transversal of $1$s, results in a valid packing.

To prevent the existence of a packing, it is necessary that for every triplet $T$ in the set $\{2,3\} \times \{4,5\} \times \{6,7\}$ that is not included in any of the lists in partition class $B$, the three triplets in $\binom{[7] \backslash T}{3}$ that contain the number $1$ are all included in the lists of partition class $B$.
It is easy to see that in the latter case, we have more than $8$ lists, which is a contradiction.
For this, one can construct a bipartite graph with the $8$ lists in $\{2,3\} \times \{4,5\} \times \{6,7\}$ on one side and $12$ lists of the form $\{ X \mid 1 \in X \in \binom{[7] \backslash T}{3} \}$ on the other side and observe that every non-empty subset of vertices on the smaller side has a neighbourhood with strictly larger cardinality.

We still need to consider the following remaining matrices:
\begin{equation*}
 A_{11}:=\begin{pmatrix}
 1 & 2&3 \\
 1&2&4 \\
 5&6&7
 \end{pmatrix}, ~A_{12}:=\begin{pmatrix}
 1 & 2&3 \\
 1&4&5\\
 6&2&7
 \end{pmatrix}
 \end{equation*}
 The crucial part is to observe that by Hall's matching theorem, there always exists a common derangement of $L(v_j)$ except possibly if there are two columns both containing $1$ and $2$, or there is a column containing exactly the three elements of $L(v_j)$.

For the matrix $A_{11}$, if $\{3,4,7\}$ is not among the lists $(L(v_i))_{1\le i \le t}$, due to Hall's matchings theorem,
 we can permute $L(v_i)$ into a common derangement of $(1,2,3), (1,2,4)$ and $(5,6,7)$, i.e., a list-packing does exist. 
 The same is true if $\{3,4,5\}$ or $\{3,4,6\}$ does not occur.
 So if no list-packing does exist, we know that $\{3,4,5\},\{3,4,6\},\{3,4,7\}$ all belong to $(L(v_i))_{1\le i \le t}.$
Analogously, in the case of matrix $A_{12}$, if one of the four lists $3 \times \{4,5\} \times \{6,7\}$ does not belong to the lists on $V$, a packing is always possible.
 As such, the matrices with two columns both containing $1$ and $2$ are already both forbidden.
 Every additional list $L(v_j)$ forbids at most $4$ matrices, and so since $4\cdot 8<36$, not all of the permuted matrices are forbidden.
\end{proof}

Possibly, the following bound would extend the classical result for list colouring to list packing in a natural way.

\begin{question}\label{ques:unbalancedcompbip}
 Is it true that $\chi_\ell^\star(K_{d,t}) \le d$ whenever $t < d^{d-1}?$
\end{question}

If this question can be answered affirmative, it also implies that there are infinitely many counterexamples to~\cite[Conj.~15]{KMMP22}, by combining the result with~\cite[Prop.~22(i)]{KMMP22}.

Finally, we remark that up to knowing the precise values for $T'(3),T(3)$ (Proposition~\ref{prop:TT'3}), we know $p(K_{a,b})$ whenever $\min\{a,b\} \le 3$ for $p \in \{ \chi_\ell, \chi_c, \chi_\ell^\star \}.$
The verification of $\chi_c(K_{3,5})=3$ and $\chi_c(K_{3,6})=4$ has been done by brute force verification. See
\url{https://github.com/StijnCambie/ListPackII}, document DP\char`_ K\char`_\{3,5\}.py.

\begin{proposition}\label{prop:chilK3t_b}
 $$\chi_{\ell}(K_{3,t})=
\begin{cases}
2 \mbox{ if } t=2\\
 3 \mbox{ if } 3 \le t \le 26,\\
 4 \mbox{ if } t \ge 27.
\end{cases}
 \chi_c(K_{3,t})=
\begin{cases}
 3 \mbox{ if } 2 \le t \le 5,\\
 4 \mbox{ if } t \ge 6.
\end{cases} 
 \chi_{\ell}^\star(K_{3,t})=
\begin{cases}
 3 \mbox{ if } 2 \le t \le 8,\\
 4 \mbox{ if } t \ge 9.
\end{cases} $$
\end{proposition}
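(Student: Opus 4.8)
The plan is to assemble the three equalities from results that are (by now) available, doing genuinely new work only where the excerpt signals it is needed. The list packing statement $\chi_{\ell}^\star(K_{3,t})=3+1_{t\ge 9}$ is exactly Proposition~\ref{prop:chilK3t}, so nothing new is required there beyond citing it. For the correspondence chromatic number, the upper bound $\chi_c(K_{3,t})\le 4$ holds for all $t$ since $\chi_c(G)\le 2\delta^\star(G)$ and $\delta^\star(K_{3,t})=\min\{3,t\}\le 3$ (or more directly, $\chi_c(K_{3,t})\le 1+\delta^\star$ is false, so use the degeneracy bound $\chi_c\le \delta^\star+1\le 4$ — I would state this carefully). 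The value $\chi_c(K_{3,5})=3$ and $\chi_c(K_{3,6})=4$ are asserted in the excerpt to have been verified by brute force (the GitHub link), so I would cite that computation; by monotonicity in $t$, $\chi_c(K_{3,t})=3$ for $2\le t\le 5$ follows from $\chi_c(K_{3,5})=3$ together with $\chi_c(K_{3,t})\ge \chi(K_{3,t})\ge 3$ for $t\ge 2$, wait — $\chi(K_{3,t})=2$, so I need $\chi_c(K_{3,2})\ge 3$ separately; in fact $\chi_c(K_{2,2})=3$ already (it is not $2$-DP-colourable), and $\chi_c$ is monotone under subgraphs, giving $\chi_c(K_{3,t})\ge 3$ for all $t\ge 2$. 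Then $\chi_c(K_{3,t})=4$ for $t\ge 6$ follows from $\chi_c(K_{3,6})=4$ and monotonicity.

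For the list chromatic number, I would invoke the classical asymmetric bipartite results. The upper bound: $\chi_\ell(K_{3,t})\le 4$ for all $t$ since $\chi_\ell\le \delta^\star+1\le 4$; and $\chi_\ell(K_{3,t})\le 3$ whenever $t$ is below the relevant threshold, by the standard argument recalled at the start of Section~\ref{sec:chi_ell^*(K3t)} (if the three lists on $U$ are not pairwise "blocking" one can $2$-colour $U$ and extend greedily; otherwise count colourings of $U$ against the at-most-one forbidden per vertex of $V$). The precise threshold $t=27$ is exactly the known value $\chi_\ell(K_{3,t})=4\iff t\ge 27$ — this appears in the literature on list colouring of complete bipartite graphs (the general fact is $\chi_\ell(K_{d,t})=d+1\iff t\ge$ a specific function of $d$, equal to $27$ for $d=3$, cf.~\cite{ERT80} and follow-ups such as~\cite{ACK20+, Zhu22+}); I would cite the exact reference. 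The lower bound $\chi_\ell(K_{3,27})\ge 4$ needs an explicit $3$-list-assignment on $27$ vertices of $V$ with lists from a $9$-element palette partitioned as $U$'s lists, realizing all $27$ "transversal-type" obstructions; I would either reproduce the short construction or cite it. The boundary case $\chi_\ell(K_{3,2})=2$ is immediate since $K_{3,2}$ is bipartite with a $2$-colouring and any $2$-list-assignment on a bipartite graph... no: $\chi_\ell(K_{3,2})=\chi_\ell(K_{2,3})$ and one checks $2$-choosability via the Erd\H{o}s--Rubin--Taylor characterization ($K_{2,3}$ contains no even cycle as its core? actually $K_{2,3}$ is not $2$-choosable), so I must be careful — I would instead note $\chi(K_{3,2})=2\le\chi_\ell$ and verify by hand that every $2$-list-assignment on $K_{3,2}=(U\cup V)$ with $|U|=3$ admits a proper colouring, which is a tiny finite check (if the two lists on $V$ are equal and size $2$, colour $U$ avoiding using up... ), falling back to $\chi_\ell(K_{3,2})\ge 2$ trivially and $\le 2$ by this check; if it turns out $\chi_\ell(K_{3,2})=3$ I would simply report that — but the excerpt states $2$, so the check must succeed.

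The main obstacle I anticipate is pinning down and justifying the exact numerical thresholds $5,6$ (for $\chi_c$) and $26,27$ (for $\chi_\ell$): the upper bounds "$=3$ below threshold" for $\chi_c$ are not available from a clean counting argument the way they are for $\chi_\ell$ (since a single correspondence-cover vertex in $V$ can forbid more than one partial colouring of $U$, as the excerpt emphasizes), so here I would lean entirely on the cited brute-force verification for $K_{3,5}$ and $K_{3,6}$, and on monotonicity in $t$ to propagate. Everything else — the upper bounds $\le 4$, the lower bounds via subgraph-monotonicity from small cases, and the list packing line — is either already proven in the excerpt or a direct citation. So the proof is essentially: (i) cite Proposition~\ref{prop:chilK3t}; (ii) assemble $\chi_c$ from the two verified values plus monotonicity and the degeneracy upper bound; (iii) assemble $\chi_\ell$ from the classical threshold result plus the degeneracy upper bound and the trivial $t=2$ check.
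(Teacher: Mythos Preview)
Your approach is essentially the same as the paper's: the paper does not give a standalone proof of this proposition but assembles it from the classical result $\chi_\ell(K_{d,t})=d+1\iff t\ge d^d$ (stated at the start of Section~\ref{sec:chi_ell^*(K3t)}, giving the threshold $27=3^3$), Proposition~\ref{prop:chilK3t} for $\chi_\ell^\star$, and the brute-force verifications $\chi_c(K_{3,5})=3$ and $\chi_c(K_{3,6})=4$, together with monotonicity in $t$ and the degeneracy bound $\chi_c\le \delta^\star+1=4$. One clarification on your hesitation: $K_{3,2}=K_{2,3}=\theta_{2,2,2}$ \emph{is} $2$-choosable by the Erd\H{o}s--Rubin--Taylor characterisation, so $\chi_\ell(K_{3,2})=2$ is correct and no ad hoc finite check is needed.
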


\section{Conclusion}\label{sec:conc}

By investigating the threshold functions $T(d)$ and $T'(d)$ for which $\chi_c^\star(K_{d,t}) \ge 2d$ resp. $2d-1$, we concluded that every positive integer different from $3$ belongs to $\Im(\chi_c^\star).$
The conclusion also would follow once the following is proven, taking into account that $\chi_c^\star(K_{2,2})=4$ and there is no graph for which $\chi_c^\star(G)=3.$

\begin{problem}
 Prove that if $a \ge b \ge 2$, then $\chi_c^\star(K_{a+1,b})\le \chi_c^\star(K_{a,b}) +1$
 and $\chi_c^\star(K_{a+1,a+1})\le \chi_c^\star(K_{a,a}) +1.$
\end{problem}
Note that the resolution of this problem also would imply that $\chi_c^\star(G) \le \Delta(G)+1$ for complete bipartite graphs different from $K_{2,2}$ and as such implying~\cite[Conj.~3]{CCDK22+} for complete bipartite graphs.

For $n$ even, it is conjectured that $\chi_c^\star(K_{n+1})= \chi_c^\star(K_{n})+2,$ while for $a$ sufficiently large in terms of $b$, we know that $\chi_c^\star(K_{a,b+1})= \chi_c^\star(K_{a,b}) +2.$
We conjecture that there is no vertex that can contribute more than $2$.

\begin{conjecture}\label{conj:extrav_max+2}
 Let $G$ be a graph and $v \in V(G)$ a vertex of $G$.
 Then $\chi_c^\star(G \setminus v) \ge \chi_c^\star(G)-2.$
\end{conjecture}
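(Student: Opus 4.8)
My plan is to reduce the statement to a packing problem on $G\setminus v$ carrying a mild ``derangement'' side-constraint, and then to spend two spare colours to absorb that constraint. Write $m=\chi_c^\star(G\setminus v)$ and $k=m+2$; I must show that every $k$-fold correspondence-cover $H$ of $G$ admits a $k$-fold packing. First extend every matching of $H$ to a perfect matching (this only makes the packing problem harder), and fix the colour of $v$ in all $k$ colourings simultaneously: after relabelling $L(v)$ we may take $\vec c(v)$ to be the identity permutation of $[k]$. Since the matching between $L(w)$ and $L(v)$ is perfect for each $w\in N(v)$, colouring $i$ forbids exactly one colour at $w$, and as $i$ ranges over $[k]$ these forbidden colours form a permutation $\tau_w$ of $[k]$; relabelling $L(w)$ by $\tau_w^{-1}$ converts the constraint into the requirement that the column $\vec c(w)=(c_1(w),\dots,c_k(w))$ be a fixed-point-free permutation of $[k]$. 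So it suffices to find a $k$-fold packing of the $k$-fold cover $H'=H-v$ of $G\setminus v$ in which, for every $w\in N(v)$, the column $\vec c(w)$ has no fixed point.

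Next I would record the (easy) monotonicity principle: if $\ell\ge\chi_c^\star(G')$ then every $\ell$-fold cover of $G'$ has an $\ell$-fold packing (peel off one correspondence colouring and apply the hypothesis to the residual $(\ell-1)$-fold cover, by induction on $\ell$). This lets me use $\chi_c^\star(G\setminus v)=m$ in the form ``every $m$-fold cover of $G\setminus v$ is $m$-fold packable''. The mechanism I would try is: at each vertex $w$ choose a two-element reserve $R_w\subseteq[k]$, pack $H'$ restricted to the colours $[k]\setminus R_w$ into $m$ of the $k$ rows, and complete with two further colourings that use the reserved colours, choosing $R_w$ and the row-placement so that both completing colourings are proper and every column along $N(v)$ lands fixed-point-free. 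Since only two entries of each column at $w\in N(v)$ remain to be placed after the $m$-fold part, two spare colours give exactly the rotational freedom one needs to avoid a fixed point, just as a single spare colour suffices to push $\chi_c(G)\le\chi_c(G\setminus v)+1$ through; that is the heuristic reason the constant should be $2$.

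The step I expect to be the real obstacle is forcing the two completing colourings to be proper. Once the $m$-fold packing is fixed, the two completing colourings are obliged to use, at each $w$, precisely the two reserved colours $R_w$, so together they constitute a \emph{$2$-fold} correspondence packing of $G\setminus v$ on the lists $(R_w)_w$, and such a thing may simply not exist: an induced odd cycle of $G\setminus v$ has $\chi_c^\star>2$, so its standard cover (which can occur among the $(R_w)_w$) has no proper $2$-fold packing at all. Hence one cannot first build the $m$-fold packing and then append two colourings; the $m$-fold part and the two completing colourings must be constructed together, and the hypothesis $\chi_c^\star(G\setminus v)=m$ has to enter through something finer than bare $m$-fold packability.

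To close this gap I would try one of two routes. The first is an induction on $|V(G)|$ that processes the vertices of $N(v)$ one at a time, maintaining a strengthened invariant which records, besides the partial packing, enough slack in each list still to destroy a remaining fixed point, in effect a packing analogue of the ``colour $v$ first, then extend'' argument, but with the extension driven by a Hall-type matching argument via Lemma~\ref{lem:checkingHallCondition_graphversion_2}. The second is a global argument in the permutation cover $H^{(k)}$ of $G$ (the cover whose independent transversals are exactly the $k$-fold packings of $H$), arguing that deleting the block of $H^{(k)}$ over $v$ together with two colours per list leaves a cover of the permutation cover of $G\setminus v$ that is still transversal-packable. The crude plan already settles the cases $\chi_c^\star(G\setminus v)\le 2$ and $N(v)$ small, and I suspect that a positive answer to a $K_{d,t}$-type threshold bound as in Question~\ref{ques:unbalancedcompbip}, applied to the neighbourhood of $v$, would be the right intermediate target; but the delicacy of the constant $2$ here is precisely why Conjecture~\ref{conj:extrav_max+2} is stated only as a conjecture.
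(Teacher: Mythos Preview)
The statement you are addressing is Conjecture~\ref{conj:extrav_max+2}, and the paper does \emph{not} prove it: it is posed as an open problem in the concluding section, accompanied only by the remark that it would be equivalent to saying that adding a universal vertex increases $\chi_c^\star$ by at most two. There is therefore no paper proof to compare against.

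Your write-up is not a proof either, and to your credit you say so explicitly. The reduction in your first paragraph is correct: after completing matchings and fixing $\vec c(v)$, the problem becomes that of finding a $k$-fold packing of the induced cover on $G\setminus v$ in which every column at a neighbour of $v$ is a derangement. Your monotonicity claim (that $\ell\ge \chi_c^\star(G')$ implies every $\ell$-fold cover of $G'$ is $\ell$-fold packable) is also fine and is recorded in the CCDK papers.

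The genuine gap is exactly the one you isolate. The ``reserve two colours per vertex, pack the remaining $m$, then complete'' strategy fails for the reason you give: the two completing colourings would constitute a $2$-fold correspondence packing of $G\setminus v$ on $2$-element lists, and such a packing need not exist (any odd cycle in $G\setminus v$ with the identity matchings on its reserved pairs is an obstruction). Nothing in your two proposed repair routes gets past this; invoking Lemma~\ref{lem:checkingHallCondition_graphversion_2} or passing to a permutation cover does not, on its own, supply the missing combinatorial ingredient, and the link you suggest to Question~\ref{ques:unbalancedcompbip} is speculative. So the status after your proposal is unchanged: a clean reduction, a clear obstacle, and no proof, which is consistent with the paper's decision to leave Conjecture~\ref{conj:extrav_max+2} open.
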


Equivalently, adding a new vertex to a graph $G$ which is adjacent to all other vertices, increases the correspondence packing number by at most two.

We would like to state the following conjecture, which states that, as is believed for Ramsey numbers (and we believe to be true for $\chi_{\ell}, \chi_c$ and $\chi_\ell^\star$ as well), the maximum values are obtained on the diagonal.

\begin{conjecture}
 When $a>b,$ then $\chi_c^\star(K_{a+1,b})\le \chi_c^\star(K_{a,b+1})$. 
\end{conjecture}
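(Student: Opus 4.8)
The plan is to argue by contraposition at the level of covers: for each $k$, given a $k$-fold correspondence-cover $\sH$ of $K_{a+1,b}$ admitting no $k$-fold correspondence-packing, construct a $k$-fold correspondence-cover $\sH'$ of $K_{a,b+1}$ that also admits none; this gives $\chi_c^\star(K_{a+1,b})\le\chi_c^\star(K_{a,b+1})$. Write the parts of $K_{a+1,b}$ as $U=\{u_1,\dots,u_{a+1}\}$, $V=\{v_1,\dots,v_b\}$ and of $K_{a,b+1}$ as $U'=\{u_1,\dots,u_a\}$, $V'=\{v_1,\dots,v_b,w\}$. The natural construction keeps in $\sH'$ the matchings of $\sH$ on all edges $u_iv_j$ with $i\in[a]$, $j\in[b]$, and must supply the $a$ matchings joining the new vertex $w$ to $u_1,\dots,u_a$. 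Using the reformulation of Section~\ref{sec:thresholdfunction_2s}, but choosing the $k$ permutation vectors on the large side, $\sH$ has no packing exactly when for every tuple $\big(\vec c(u_1),\dots,\vec c(u_{a+1})\big)$ of permutations of $[k]$ some $v_j$ fails the Hall condition of Lemma~\ref{lem:checkingHallCondition_graphversion_2}, i.e.\ the permutations $\sigma_{i,j}\vec c(u_i)$ have no common derangement. Now split into two cases. If already for every $a$-tuple $\big(\vec c(u_1),\dots,\vec c(u_a)\big)$ some $v_j$ fails, then $\sH-u_{a+1}$ is a bad $k$-fold cover of $K_{a,b}$, and since $\chi_c^\star$ is monotone under subgraphs (a bad cover of a subgraph extends to a bad cover of the whole graph by assigning the extra lists and matchings arbitrarily) and $K_{a,b}\subseteq K_{a,b+1}$, we are done. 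Otherwise there is a \emph{critical} tuple $\vec c^{\,*}(u_1),\dots,\vec c^{\,*}(u_a)$ for which $v_1,\dots,v_b$ are all completable and only the extra vertex $u_{a+1}$ forces a block.

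The role of $w$ and its matchings is then to reinstate, using constraints placed only on $u_1,\dots,u_a$, the obstruction that $u_{a+1}$ provided against $V$; for the critical tuple this at least requires choosing $w$'s matchings so that the $a$ permutations seen at $w$ violate the Hall condition. This is where I expect the real difficulty. On one hand, a single vertex's worth of matchings is weak: since each of the $a$ permutations forbids each colour at only one position, a colour lying in the forbidden set of every position of a family $P$ forces $|P|\le a$, so a Hall violation would need more than $k-|P|\ge k-a$ common forbidden colours at those positions, which is impossible when $k\ge 2a$; hence for longer lists $w$ alone cannot work and one must also alter the matchings on $U'\times V$. On the other hand — and this is the main obstacle — the obstruction must survive for \emph{every} large-side tuple together with \emph{every} choice of the extra small-side vector $\vec c(w)$, which the packing-seeker now gets for free; thus the matchings of $w$ (and, if needed, of the rest of $\sH'$) must be designed to exploit the global structure of $\sH$, not just its behaviour on the critical tuple. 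I would attempt this by a counting argument in the spirit of Sections~\ref{sec:thresholdfunction_2s}--\ref{sec:thresholdfunction_2s-1}: bound the number of large-side configurations that a well-chosen matching at $w$ fails to block, and compare with the (comparatively small) number that $\sH$ leaves unblocked.

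A complementary unconditional argument settles part of the range and pins down what remains. By the degeneracy bound $\chi_c^\star(G)\le 2\delta^{\star}(G)$ of~\cite{CCDK21}, $\chi_c^\star(K_{a+1,b})\le 2\delta^{\star}(K_{a+1,b})=2b$ because $b<a+1$; and $K_{b+1,b+1}\subseteq K_{a,b+1}$ since $a\ge b+1$, so $\chi_c^\star(K_{a,b+1})\ge\chi_c^\star(K_{b+1,b+1})$. Hence for each fixed $b$ the conjecture reduces to the single inequality $\chi_c^\star(K_{b+1,b+1})\ge 2b$, which holds trivially for $b\le 2$ (via $K_{2,2}\subseteq K_{b+1,b+1}$ and $\chi_c^\star(K_{2,2})=4$) but is unclear, and plausibly false, for large $b$, so this route only covers bounded $b$. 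At the opposite end, Theorem~\ref{thr:threshold_2d}\eqref{itm:2} gives $\chi_c^\star(K_{a,b+1})=2(b+1)>2b\ge\chi_c^\star(K_{a+1,b})$ once $a$ is large in terms of $b$. The genuinely open situation is therefore $b$ large and $a$ only moderately larger than $b$ (say $a=b+O(1)$), with both graphs near-balanced; there the cover-transformation plan above, fed by a counting bound on the matchings at $w$ that is robust against the free vector $\vec c(w)$, looks like the most promising approach, and making that bound robust is the crux.
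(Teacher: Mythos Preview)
The statement you are addressing is presented in the paper as a \emph{conjecture}, not a theorem: the paper offers no proof, and the authors explicitly leave it open. There is therefore no proof in the paper to compare your attempt against.

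Your write-up is, as you yourself make clear, a research outline rather than a proof. The partial unconditional observations are sound: the degeneracy bound does give $\chi_c^\star(K_{a+1,b})\le 2b$, monotonicity gives $\chi_c^\star(K_{a,b+1})\ge\chi_c^\star(K_{b+1,b+1})$, and Theorem~\ref{thr:threshold_2d}\eqref{itm:2} handles the regime where $a$ is enormous compared to $b$. However, the reduction to $\chi_c^\star(K_{b+1,b+1})\ge 2b$ is essentially hopeless beyond very small $b$: it would force $\chi_c^\star(K_{n,n})\ge 2n-2$ for all $n$, whereas one expects $\chi_c^\star(K_{n,n})$ to grow much more slowly (cf.\ the logarithmic growth of $\chi_\ell(K_{n,n})$). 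So this branch of your argument covers only $b\le 2$ and should not be expected to extend.

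The cover-transformation idea is natural, and your case split is the right first move, but the genuine gap is exactly the one you flag: in the ``critical tuple'' case you must design the matchings at $w$ (and possibly modify those on $U'\times V$) so that the resulting cover of $K_{a,b+1}$ is bad for \emph{every} choice of $(\vec c(u_1),\dots,\vec c(u_a))$ together with the new free vector $\vec c(w)$. Your heuristic that a single vertex $w$ is too weak when $k$ is large relative to $a$ is essentially irrelevant here, since the only $k$ that matter satisfy $k<2b\le 2a$; the real obstruction is global, not local. A counting argument in the style of Sections~\ref{sec:thresholdfunction_2s}--\ref{sec:thresholdfunction_2s-1} could conceivably help, but nothing in the paper suggests it yields the required uniformity over all tuples, and you have not supplied one. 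As it stands, the proposal identifies the hard case correctly but does not resolve it; the conjecture remains open.
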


 This conjecture being true, would imply that to study the analogue of the first open question in~\cite{ERT80} it is sufficient to consider the packing numbers of balanced complete bipartite graphs.
 In~\cite{ERT80}, the authors consider the value $N(2,k)$ which is the smallest order of a (complete) bipartite graph $G$ for which $\chi_{\ell}(G)>k.$ Table~\ref{tab:tableN2kvalues} presents the values for the extension $N_\rho(2,k)$, which is the analogue presenting the smallest order of a (complete) bipartite graph $G$ for which $\rho(G)>k$ for the four studied graph parameters $\rho$. Note that already for list packing, we did not determine the precise value for $k=3$.
 
\begin{table}[h]
\begin{center}
 \begin{tabular}{ | l | l | l |l|l| }
 \hline
$k\setminus \rho$& $\chi_{\ell}$ &$\chi_{\ell}^\star$ & $\chi_c$ & $\chi_c^\star$ \\
 \hline 
 2& 6 & 4&4&4\\ 
 3& 14 & 11? &8&4\\
 \hline
 \end{tabular}
\end{center}
\caption{Values for $N_\rho(2,k)$ for $k \in \{2,3\}$ and $\rho \in \{\chi_{\ell},\chi_c, \chi_{\ell}^\star,\chi_c^\star\}.$}\label{tab:tableN2kvalues}
\end{table}

The proof for $N(2,3)=14$ can be found in~\cite{HMT96}.
A brute force computer verification has shown that $\chi_\ell^\star(K_{5,6})\ge 4$, while for every list-assignment with lists in $\binom{[5]}{3}$, the graph $K_{5,5}$ is list-packable. See
\url{https://github.com/StijnCambie/ListPackII}, document chi$\char`_\ell\char`^ (K\char`_6,5)$.py.
The (up to renaming colours) unique lists for which $K_{6,5}$ is not $3$-list-packable are
$(L(v_i))_{i \in [6]}=\{ A \cup \{5\} \mid A \in \binom{[4]}{2}\}$ and $(L(u_i))_{i \in [5]}=\binom{[4]}{3}\cup \{1,2,5\}.$
It is easy to see that no proper colouring exists assigning the colour $1$ to $u_5$ ($L(u_5)=\{1,2,5\}$) and thus there is no $L$-packing of $K_{6,5}.$
For $N_{\chi_c}(2,3)=8$, we refer to Proposition~\ref{prop:chilK3t_b} and a computer verification that $\chi_c(K_{4,4})=3.$ See
\url{https://github.com/StijnCambie/ListPackII}, document chi\char`_c(K\char`_4,4).py.

 We remind the reader that also studying the unbalanced case, as initiated in~\cite{ACK20+,CaKa20+} might be interesting and in particular, Question~\ref{ques:unbalancedcompbip} might be an elegant extension of a classical boundary case.

The probability of existence of an $L$-colouring has also been studied for bipartite graphs with random lists~\cite{KN06}, providing sharp thresholds. It would be interesting to see if the threshold for the packing variant is very different,
as an indication of the substantial difference between finding one proper colouring and a packing.

\section*{Acknowledgement}
The authors would like to express their gratitude to the organisers of the $17^{th}$ (virtual) Graduate Student Combinatorics Conference, which lead to this collaboration.

\paragraph{Open access statement.} For the purpose of open access,
a CC BY public copyright license is applied
to any Author Accepted Manuscript (AAM)
arising from this submission.

\bibliographystyle{abbrv}
\bibliography{listpack}

\end{document}